\tikzset{>=latex}
\pgfplotsset{compat=newest}
\numberwithin{equation}{section}
\newtheorem{theorem}{Theorem}[section]
\newtheorem{definition}[theorem]{Definition}
\newtheorem{proposition}[theorem]{Proposition}
\newtheorem{corollary}[theorem]{Corollary}
\newtheorem{lemma}[theorem]{Lemma}
\newtheorem{remark}[theorem]{Remark}
\newtheorem*{definition*}{Definition}
\newcommand{\cali}[1]{\mathscr{#1}}
\newcommand{\supp}{{\rm supp}}
\newcommand{\diff}{{\rm d}}
\newcommand{\DSH}{{\rm DSH}}
\newcommand{\del}{\partial}
\newcommand{\dist}{\mathop{\mathrm{dist}}\nolimits}
\newcommand{\ddc}{{\rm dd^c}}
\newcommand{\dc}{{\rm d^c}}
\newcommand{\dd}{{\rm d}}
\def\d{\operatorname{d}}
\newcommand{\dbar}{\overline\partial}
\newcommand{\ddbar}{\partial\overline\partial}
\newcommand{\vep}{\varepsilon}
\newcommand{\Leb}{{\rm Leb}}
\newcommand{\qpsh}{{\rm qpsh}}
\newcommand{\Cc}{\cali{C}}
\newcommand{\Dc}{\cali{D}}
\newcommand{\Fc}{\cali{F}}
\newcommand{\Uc}{\cali{U}}
\newcommand{\cA}{\mathcal{A}}
\newcommand{\cB}{\mathcal{B}}
\newcommand{\D}{\mathbb{D}}
\newcommand{\C}{\mathbb{C}}
\newcommand{\N}{\mathbb{N}}
\newcommand{\R}{\mathbb{R}}
\renewcommand\P{\mathbb{P}}
\newcommand\restrict[1]{\raisebox{-.5ex}{$|$}_{#1}}
\newcommand{\lp}{\langle}
\newcommand{\rp}{\rangle}
\newcommand{\norm}[1]{\lVert#1\rVert}
\newcommand{\oA}{\mathcal{A}}
\newcommand{\oB}{\mathcal{B}}
\newcommand{\oN}{\mathcal{N}}
\newcommand{\oE}{\mathcal{E}}
\newcommand{\bv}{\mathbf{v}}
\title[Exponential mixing of all orders on K\"ahler manifolds]{Exponential mixing of all orders on K\"ahler manifolds: (quasi-)plurisubharmonic observables}
\author{Marco Vergamini}
\address{Scuola Normale Superiore, Pisa, Italy}
\email{marco.vergamini@sns.it}
\author{Hao Wu}
\address{Nanjing University, Nanjing, China}
\email{haowu@nju.edu.cn}
\thanks{}
\begin{document}

\begin{abstract}
Let $f$ be a holomorphic automorphism of a compact K\"ahler manifold with simple action on cohomology and $\mu$ its unique measure of maximal entropy. We prove that $\mu$ is exponentially mixing of all orders for all d.s.h.\ observables, i.e., functions that are locally differences of plurisubharmonic functions. As a consequence, every d.s.h.\ observable satisfies the central limit theorem with respect to $\mu$.
\end{abstract}

\clearpage\maketitle
\thispagestyle{empty}

\noindent\textbf{Mathematics Subject Classification 2020:} 37F80, 32H50, 32U05, 60F05

\medskip

\noindent\textbf{Keywords:} K\"ahler manifolds, equilibrium measure, exponential mixing, central limit theorem, super-potentials

\setcounter{tocdepth}{1}

\section{Introduction} \label{intro}

Let $(X, \omega)$ be a compact K\"ahler manifold of dimension $k$ and $f$ a holomorphic automorphism of $X$. We refer to \cite{C01,D-TD12,DS05} for the general properties of such maps. We denote by $f^n$ the $n$-th iterate of $f$. For $0\le q\le k$, the \emph{dynamical degree of order $q$} of $f$ is the spectral radius of the pull-back operator $f^*$ acting on the Hodge cohomology group $H^{q,q}(X,\mathbb{R})$. It is denoted by $d_q(f)$, or simply by $d_q$ if there is no confusion. By Poincaré duality, the dynamical degree $d_q$ of $f$ is equal to the dynamical degree $d_{k-q}(f^{-1})$ of $f^{-1}$. We have $d_0=d_k=1$ and $d_q(f^n)=d_q^n$ for all $q$.

A theorem by Khovanskii \cite{K79}, Teissier \cite{Te79}, and Gromov \cite{G90} implies that the sequence $q\mapsto\log{d_q}$ is concave. So, there are integers $0\le p\le p'\le k$ such that
$$1=d_0<\dots<d_p=\dots=d_{p'}>\dots>d_k=1.$$

We assume that $f$ has \emph{simple action on cohomology}, i.e., that we have $p=p'$ and $f^*$, acting on $H^{p,p}(X,\mathbb{R})$, admits only one eigenvalue of maximal modulus $d_p$. We fix a constant $\max\{d_{p-1},d_{p+1}\}<\delta_0<d_p$ such that all the eigenvalues of $f^*$ acting on $H^{p,p}(X,\mathbb{R})$, except for $d_p$, have modulus smaller than $\delta_0$. We call $d_p$ the \emph{main dynamical degree} and $\delta_0$ the \emph{auxiliary dynamical degree} of $f$.

From \cite{C01, DS05, ds-superpot-kahler} we know that $f$ admits a unique probability measure of maximal entropy $\mu$, called the \emph{equilibrium measure} of $f$, which is the intersection of a positive closed $(p, p)$-current $T_+$ and a positive closed $(k-p, k-p)$-current $T_-$ (the \emph{Green currents} of $f$ and $f^{-1}$, respectively). A main question in the domain is to study the statistical properties of $\mu$. The major difficulties in this setting are the presence of both attractive and repelling directions and the non uniform hyperbolicity of the system. The goal of this paper is to address these questions for a large class of natural observables.

\smallskip

The simplest holomorphic dynamical systems displaying both the difficulties above are given by complex H\'enon maps, see, e.g., \cite{bedford1,bed-1991,forn}. In this case, the exponential mixing for two H\"older-continuous observables was first established by Dinh in \cite{dinh-decay-henon}. It was recently extended by Bianchi-Dinh in \cite{bian-dinh-sigma} to any number of observables, and by the authors in \cite{wu-vergamini-24, Wu-Ergodic} to all plurisubharmonic (p.s.h.) observables. We also refer to \cite{hen-gab-CLT,vigny-decay} for the case of generic birational maps of $\P^k$ and to \cite{bd-gafa,dinh-exponential} for the case of holomorphic endomorphisms of $\P^k$.

\smallskip

On a compact K\"ahler manifold, p.s.h.\ functions are constant. So we consider in this paper \emph{d.s.h.}\ observables, which are, roughly speaking, locally differences of p.s.h.\ functions, see \cite{din-sib-cmh} and Subsection \ref{dsh-subsec} for the precise definition. The following is our main result, which settles the problem of mixing for d.s.h.\ observables on compact K\"ahler manifolds.

\begin{theorem} \label{thm-mixing}
Let $f$ be a holomorphic automorphism of a compact K\"ahler manifold $(X, \omega)$ of dimension $k$. Assume that $f$ has simple action on cohomology, let $\mu$ be its equilibrium measure and $d_p$ be its main dynamical degree. Then, $\mu$ is exponentially mixing of all orders for all observables in $\DSH(X)$. More precisely, there exists $0<\delta'<d_p$ such that for every $\delta'<\delta<d_p$, every integers $\kappa\in \N^*$,  $0=n_0\leq n_1\leq \ldots\leq n_\kappa$ and every $\varphi_0,\varphi_1,\dots, \varphi_\kappa \in \DSH(X)$, we have
$$\Big| \int \varphi_0 (\varphi_1\circ f^{n_1})\cdots (  \varphi_\kappa \circ f^{n_\kappa}) \,\dd \mu -\prod_{j=0}^\kappa  \int \varphi_j \,\dd \mu \Big| \leq C_{\delta,\kappa}  \Big(\frac{\delta}{d_p}\Big) ^{\min_{0\leq j\leq \kappa-1}(n_{j+1}-n_j)/2}\prod_{j=0}^\kappa \norm{\varphi_j}_\DSH,$$
where  $C_{\delta,\kappa}>0$ is a constant independent of $n_1,\dots,n_\kappa,\varphi_0,\dots,\varphi_\kappa$.
\end{theorem}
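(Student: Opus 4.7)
The plan is to combine the exponential mixing of all orders for H\"older observables established by Bianchi-Dinh \cite{bd-kahler} with a regularization scheme for d.s.h.\ observables, controlled by a moderate-measure estimate for $\mu$ and its translates. This follows the overall strategy from our work on H\'enon maps in \cite{wu-vergamini-24}, transferred to the K\"ahler framework via the super-potential calculus of Dinh-Sibony \cite{ds-superpot-kahler}.

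After subtracting means so that $\int\varphi_j\,d\mu = 0$ for $j\ge 1$, the first main step is to approximate each $\varphi_j$ by a smooth d.s.h.\ function $\varphi_j^\epsilon$ satisfying $\|\varphi_j^\epsilon\|_\DSH\le C\|\varphi_j\|_\DSH$, with $\mathcal C^\alpha$-norm bounded polynomially in $\epsilon^{-1}$, and with a quantitative control on $\varphi_j-\varphi_j^\epsilon$ against appropriate test measures. Such a regularization is produced in the standard way by smoothing the positive closed $(1,1)$-currents $\ddc\varphi_j+c\omega$ using the Dinh-Sibony averaging kernels on $X\times X$. One then applies the Bianchi-Dinh estimate to $\varphi_0^\epsilon,\dots,\varphi_\kappa^\epsilon$ to obtain a decay of the form $\theta^m\prod_j\|\varphi_j^\epsilon\|_{\mathcal C^\alpha}$ with $m=\min_{0\le j\le\kappa-1}(n_{j+1}-n_j)$ and some $\theta<1$, and bounds the residual error factor by factor via H\"older's inequality. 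Optimizing $\epsilon$ as a power of $\theta^m$ then yields the claimed rate $(\delta/d_p)^{m/2}$, with a slightly degraded base coming from the polynomial loss.

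The main obstacle is the moderate-measure estimate required to control the residuals: for every $\varphi\in\DSH(X)$ with $\|\varphi\|_\DSH\le 1$, the integral $\int e^{\alpha|\varphi|}\,d\nu$ must stay uniformly bounded, not only for $\nu=\mu$ but for the signed measures $\nu=\prod_{i\ne j}(\varphi_i\circ f^{n_i})\,d\mu$ that appear when estimating the error one factor at a time. Establishing this uniform bound requires a careful super-potential analysis of the wedge product $T_+\wedge T_-$ paired against unbounded d.s.h.\ potentials, together with a quantitative equidistribution statement $d_p^{-n}(f^n)^*\Omega\to cT_+$ in which the DSH dependence is tracked uniformly. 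This is where the simplicity of the action on $H^{p,p}(X,\R)$ enters crucially, via the spectral gap $\delta_0<d_p$ providing the exponential speed; the heart of the argument will be to propagate this uniform control through the multi-linear/inductive structure of the $\kappa$-fold correlation so that the constants $C_{\delta,\kappa}$ can be made to depend on $\kappa$ but not on the individual time gaps.
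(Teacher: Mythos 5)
Your overall architecture (regularize, apply a mixing estimate to the smooth parts, control residuals via moderateness, optimize the regularization parameter) is reasonable in spirit, but it breaks at the quantitative step where you claim a smoothing $\varphi_j^\epsilon$ with $\Cc^\alpha$-norm polynomial in $\epsilon^{-1}$ and an error on $\varphi_j-\varphi_j^\epsilon$ good enough to "optimize $\epsilon$ as a power of $\theta^m$". On a compact K\"ahler manifold, for a merely bounded quasi-p.s.h.\ function (let alone an unbounded d.s.h.\ one) the available regularization error is only logarithmic: gluing local convolutions across charts \`a la B\l{}ocki--Ko\l{}odziej gives $\|\varphi_\varepsilon-\varphi\|_{L^1(\omega^k)}\lesssim -\|\varphi\|_\infty/\log\varepsilon$, not a power of $\varepsilon$ (this is exactly Proposition \ref{convol}(iii); indeed the present paper corrects an error of precisely this type in \cite{wu-kahler}), and testing against $\mu=T_+\wedge T_-$ via its H\"older super-potential only upgrades this to $(-1/\log\varepsilon)^{\lambda_0}$. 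With a residual of size $(-1/\log\varepsilon)^{\lambda_0}$, forcing exponential decay in $m=\min_j(n_{j+1}-n_j)$ requires $\varepsilon$ to be doubly exponentially small in $m$, and then your main term $\theta^m\prod_j\|\varphi_j^\epsilon\|_{\Cc^\alpha}\sim\theta^m\varepsilon^{-C(\kappa+1)}$ blows up and the balance fails. So the black-box use of the Bianchi--Dinh H\"older estimate, with the $\Cc^\alpha$-norms of the regularizations as multiplicative factors, cannot yield the stated rate.

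The paper's proof avoids this by never letting the $\Cc^\alpha$ (or $\Cc^2$) norm of the regularization multiply the mixing bound. It works on $X\times X$ with $F=(f,f^{-1})$ and the special auxiliary functions $\Phi^\pm$ of \eqref{phipm}, whose $\ddc$ is bounded below by $-\omega_0$ (Lemma \ref{ddc-positive}); after regularizing $\Phi^\pm$ one gets the crucial uniform bound $\|\ddc\Phi^\pm_\varepsilon\wedge\mathbb{T}_+\|_*\le c_\kappa$ independent of $\varepsilon$ and of the times $n_j$ (Lemma \ref{lem-star-bound}, where the simple action on cohomology enters), so in the super-potential estimate of Proposition \ref{key-ineq} the parameter $\varepsilon$ appears only through a factor $-\log\varepsilon$; this is compatible with the doubly exponential choice $\varepsilon=e^{-(d_p/\delta)^{n/\lambda_0}}$ and yields Proposition \ref{prop-mixing-bounded} for bounded quasi-p.s.h.\ observables. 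Separately, unbounded d.s.h.\ observables are not smoothed directly but truncated: Lemma \ref{dsh-split-psh} plus the moderate-measure tail splitting of Lemma \ref{tail-split}, with truncation level $N$ proportional to the minimal gap, reduce to the bounded case at the cost of a polynomial factor $(N+1)^{\kappa+1}$, which is absorbed by slightly worsening $\delta$. If you want to salvage your scheme, you would need either a power-of-$\varepsilon$ regularization rate (unavailable without H\"older regularity of the $\varphi_j$) or, as in the paper, a formulation in which the smoothing parameter enters the dynamical estimate only logarithmically.
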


We refer to \cite{bd-kahler,DS10CM} for the more regular case of $\Cc^2$-continuous observables and to \cite{wu-kahler} for the case $\kappa=1$. Observe that all d.s.h.\ functions are in $L^r(\mu)$ for every $r\ge1$ \cite{dinh-exponential}, hence all the integrals above are well defined.

\smallskip

Our proof in \cite{wu-vergamini-24} for the case of H\'enon maps relies on precise estimates for p.s.h.\ functions and on the homogeneous structure of $\P^2$. As non-trivial p.s.h.\ functions do not exist on compact K\"ahler manifolds, both these ingredients are not available now. Instead, we will make a crucial use of the theory of \emph{super-potentials} by Dinh-Sibony \cite{dinh-sibony:acta,ds-superpot-kahler}, which permits to quantify the regularity of currents of arbitrary degree when seen as operators on appropriate spaces of forms.

\medskip

A consequence of our main theorem is that all d.s.h.\ observables satisfy the central limit theorem. More precisely, fix an observable $\varphi\in \DSH(X)$ and set $S_n(\varphi):=  \varphi +\varphi \circ f +\cdots +\varphi \circ f^{n-1}$. By Birkhoff’s ergodic theorem, we have $n^{-1} S_n (\varphi) (x) \to \lp \mu,\varphi \rp$ for $\mu$-almost every $x\in X$.
As in \cite{wu-vergamini-24}, the following control of the rate of the convergence is a consequence of Theorem \ref{thm-mixing} and \cite[Theorem 4.1]{wu-vergamini-24}, which is an adapted version of the criterion in \cite{bjo-gor-clt}. We let $\oN(0,\sigma^2)$ denote the Gaussian distribution with mean $0$ and variance $\sigma^2$ (when $\sigma=0$, we mean that $\oN(0,\sigma^2)$ is the trivial point distribution at $0$).

\begin{corollary}\label{thm-clt}
Let $X,f$ and $\mu$ be as in Theorem \ref{thm-mixing}. Then, every $\varphi\in\DSH(X)$ satisfies the central limit theorem with respect to $\mu$. Namely, we have
\begin{equation} \label{law-conv}
    \frac{S_n(\varphi )-  n\lp \mu, \varphi \rp}{\sqrt n} \longrightarrow  \oN(0,\sigma^2)  \quad \text{as }\, n\to\infty \quad \text{in law},
\end{equation}
where  $$\sigma^2 := \lim_{n\to \infty} \frac 1n\int \big(S_n(\varphi) -  \lp \mu, \varphi\rp \big)^2 \,\dd \mu.$$
\end{corollary}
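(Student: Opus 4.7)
The plan is to deduce Corollary \ref{thm-clt} directly from Theorem \ref{thm-mixing} by invoking the abstract central limit criterion \cite[Theorem 4.1]{wu-vergamini-24}, which itself adapts \cite{bjo-gor-clt} to a setting of multiple exponential mixing on a normed space of observables. The principle behind that criterion is the classical method of moments: exponential mixing of all orders together with enough integrability of the observable forces every moment of the normalized Birkhoff sum $(S_n(\varphi)-n\lp\mu,\varphi\rp)/\sqrt n$ to converge to the corresponding moment of $\oN(0,\sigma^2)$, and convergence of all moments then yields convergence in law.

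Concretely, I would first set $\psi:=\varphi-\lp\mu,\varphi\rp$, which still lies in $\DSH(X)$ with $\|\psi\|_{\DSH}$ controlled by $\|\varphi\|_{\DSH}$, reducing to the zero-mean case. Two hypotheses then need to be verified in order to apply the criterion. The first is the $L^r$-control $\|\psi\|_{L^r(\mu)}\lesssim_r\|\psi\|_{\DSH}$ for every $r\ge 1$; this is the integrability property of d.s.h.\ functions from \cite{dinh-exponential}, recalled just after Theorem \ref{thm-mixing}. The second is exponential mixing of all orders on $\DSH(X)$ with a decay rate independent of the number of observables, which is exactly the content of Theorem \ref{thm-mixing}: once one fixes some $\delta\in(\delta',d_p)$, the rate $(\delta/d_p)^{1/2}\in(0,1)$ does not depend on $\kappa$.

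The existence and finiteness of $\sigma^2$ in turn follow from the $\kappa=1$ case of Theorem \ref{thm-mixing}, which gives absolute convergence of
$$\sigma^2=\sum_{n\in\Z}\int\psi\cdot(\psi\circ f^{|n|})\,\dd\mu,$$
via Ces\`aro summation of the expanded integrand $\frac{1}{n}\int S_n(\psi)^2\,\dd\mu$; non-negativity of the limit is automatic. With both hypotheses in hand, the conclusion (\ref{law-conv}) is a direct application of \cite[Theorem 4.1]{wu-vergamini-24}.

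The only point requiring care is checking that the precise quantitative form of Theorem \ref{thm-mixing} --- in particular the dependence of the constants $C_{\delta,\kappa}$ on $\kappa$, and the factor $\tfrac{1}{2}$ in the exponent of the mixing rate --- is compatible with the hypotheses of the abstract criterion. Since that criterion was designed in the H\'enon setting of \cite{wu-vergamini-24} with exactly this type of bound in mind, the check is routine: the whole method-of-moments expansion of $\int S_n(\psi)^{2m}\,\dd\mu$ into sums of $(2m)$-point correlations carries over verbatim, with Theorem \ref{thm-mixing} substituting the mixing input used there. In other words, no new combinatorial or analytic obstacle arises in the K\"ahler setting: all the substance of the corollary is already contained in Theorem \ref{thm-mixing}.
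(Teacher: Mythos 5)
There is a genuine gap: you have misstated the hypotheses of the criterion you invoke. \cite[Theorem 4.1]{wu-vergamini-24} does \emph{not} only ask for exponential mixing of all orders on $\cA=\DSH(X)$ plus $L^r(\mu)$-control of the observables; its second hypothesis is a \emph{controlled tail property} with respect to an auxiliary cone $\cB$ of bounded functions satisfying structural conditions of its own, and this is where the real work of the corollary lies. The reason this extra structure is unavoidable is exactly the point your ``routine check'' glosses over: in the method-of-moments (or cumulant) expansion of $\int S_n(\psi)^{2m}\,\dd\mu$, correlation terms with clustered times must be regrouped, so that blocks such as $\psi\cdot(\psi\circ f^{j_1})\cdots(\psi\circ f^{j_r})$ with small gaps are treated as single observables. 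Products of d.s.h.\ functions are in general not d.s.h.\ (and their $\DSH$-norms are not controlled), so Theorem \ref{thm-mixing} cannot be ``substituted verbatim'' as the mixing input for these blocks; unboundedness of d.s.h.\ observables is the whole difficulty, and it does not disappear by citing the $L^r$ bounds of \cite{dinh-exponential}.

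The paper resolves this by taking $\cB$ to be the cone of \emph{bounded quasi-p.s.h.}\ functions with the norm $\|\cdot\|_\qpsh$, and by verifying: (a) multiple exponential mixing on $\cB$, which is Proposition \ref{prop-mixing-bounded} (a statement with the $\qpsh$-norm, not a consequence of Theorem \ref{thm-mixing}); (b) a product-decomposition property, namely that products of elements of $\cB$ split as differences of elements of $\cB$ with controlled norms --- this is done with explicit polynomials essentially equal to $\pm\Phi^\pm/2$ from \eqref{phipm}, and positivity is checked as in Lemma \ref{ddc-positive}; and (c) the controlled tail decomposition of a d.s.h.\ function $\varphi=\varphi_1^+-\varphi_1^-+\varphi_2$ with $\varphi_1^\pm$ bounded quasi-p.s.h.\ of norm $O(M)$ and $\|\varphi_2\|_{L^q(\mu)}$ exponentially small in $M$, which comes from Lemmas \ref{dsh-split-psh} and \ref{tail-split} (using that $\mu$ is moderate). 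None of (a)--(c) is addressed in your proposal, and without them the appeal to \cite[Theorem 4.1]{wu-vergamini-24} does not go through. Your reduction to the zero-mean case and the identification of $\sigma^2$ via the $\kappa=1$ decay are fine, but they are the easy part; the corollary is not a formal consequence of Theorem \ref{thm-mixing} alone.
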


\medskip

\noindent\textbf{Notations.}
The symbols $\lesssim$ and $\gtrsim$ stand for inequalities up to a positive multiplicative constant, and a subscript means that said constant can depend on some variables, e.g., $\lesssim_t$ means that the implicit constant can depend on the variable $t$. The pairing $\lp \cdot,\cdot \rp$ is used for the integral of a function with respect to a measure or, more generally, the value of a current at a test form. The mass of a positive closed current $S$ of bidegree $(q,q)$ on a compact K\"ahler manifold $(X,\omega)$ of dimension $k$ is defined as $\|S\|:=\lp S,\omega^{k-q}\rp$. If $U$ is an open set in $\C^k$, we denote by $bU$ the topological boundary of $U$, i.e., $bU:=\overline{U}\setminus U$.

\medskip

\noindent\textbf{Acknowledgements.}
The first author is part of the PHC Galileo project G24-123. He would also like to thank the National University of Singapore for the warm welcome and the excellent work conditions.

\section{Preliminaries} \label{defs}

\subsection{Quasi-plurisubharmonic and d.s.h.\ functions} \label{dsh-subsec}

We fix in this section a compact K\"ahler manifold $(X,\omega)$. A function $\varphi:X\rightarrow\R \cup \{-\infty\}$ is called \emph{quasi-plurisubharmonic}  (\emph{quasi-p.s.h.}\ for short) if, locally, it is the difference of a p.s.h.\ function and a smooth one. A function $\varphi:X\rightarrow\R \cup\{\pm \infty\}$ is \emph{d.s.h.}\ \cite{din-sib-cmh,dinh-sibony:cime} if it is the difference of two quasi-p.s.h.\ functions outside of a pluripolar set. Denote by $\DSH(X)$ the space of d.s.h.\ functions on $X$. If $\varphi$ is d.s.h., there are two positive closed $(1,1)$-currents $R^\pm$ on $X$ such that $\ddc \varphi =R^+ -R^-$. As these two currents are cohomologous, they have the same mass. We define a norm on $\DSH(X)$ by 
$$\norm{\varphi}_\DSH:= \Big| \int  \varphi \,\omega^k  \Big|+\inf \norm {R^\pm},$$
where the infimum is taken over all $R^\pm$ as above. We obtain an equivalent norm if, instead of $\omega^k$, we take any measure $\nu$ that is \emph{PB}, i.e., such that all d.s.h.\ functions are integrable with respect to $\nu$.
We will need the following decomposition result for d.s.h.\ functions, see for instance \cite{din-sib-cmh} and \cite[Lemma 2.1]{wu-vergamini-24}.

\begin{lemma} \label{dsh-split-psh}
Let $\varphi$ be a d.s.h.\ function on $X$ with $\|\varphi\|_\DSH\le1$. There exist two functions $\varphi_+$ and $\varphi_-$ which are quasi-p.s.h.\ and such that
$$\ddc\varphi_\pm\ge-C\omega,\qquad\|\varphi_\pm\|_\DSH\le C,\qquad\varphi_\pm\le0,\qquad\text{and}\qquad\varphi=\varphi_+-\varphi_-,$$
where $C$ is a positive constant that depends on $(X,\omega)$ but is independent of $\varphi$.
\end{lemma}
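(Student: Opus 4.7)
The plan is to reduce the decomposition to the $\ddc$-lemma on $(X,\omega)$ together with the fact that a pluriharmonic function on a compact connected K\"ahler manifold is constant.

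By the definition of $\|\varphi\|_\DSH\le 1$, I would first fix positive closed $(1,1)$-currents $R^\pm$ with $\ddc\varphi=R^+-R^-$, common mass bounded by $2$ (say), and $|\int\varphi\,\omega^k|\le 1$. Since $\ddc\varphi$ is exact, $R^+$ and $R^-$ represent the same cohomology class $\alpha\in H^{1,1}(X,\R)$. As the mass $\|R^\pm\|$ equals $\langle\alpha,[\omega]^{k-1}\rangle$, which controls a norm on this finite-dimensional space on the pseudoeffective cone, I can select a smooth closed real $(1,1)$-form $\theta$ representing $\alpha$ with $-C_1\omega\le\theta\le C_1\omega$, the constant $C_1$ depending only on $(X,\omega)$.

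The $\ddc$-lemma then yields quasi-p.s.h.\ functions $u^\pm$ with $R^\pm=\theta+\ddc u^\pm$, unique up to additive constants, and $\ddc u^\pm\ge-\theta\ge-C_1\omega$. I would normalize by $\int u^\pm\,\omega^k=0$; the standard $L^1$-compactness of the set of normalized $C_1\omega$-plurisubharmonic functions then yields $\sup_X u^\pm\le C_2$ and $\|u^\pm\|_{L^1(\omega^k)}\le C_2$ for a constant $C_2=C_2(X,\omega)$. Since $\ddc(\varphi-u^++u^-)=0$ as currents, elliptic regularity forces $\varphi-u^++u^-$ to be smooth and pluriharmonic, hence constant on each connected component of $X$. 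Integrating against $\omega^k$ and using the normalizations above yields $\varphi=u^+-u^-+c$ with $|c|\le C_3(X,\omega)$.

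Finally, I would choose $M:=C_2+C_3+1$ and set $\varphi_+:=u^++c-M$, $\varphi_-:=u^--M$. Then $\varphi_\pm\le 0$, $\varphi=\varphi_+-\varphi_-$, and $\ddc\varphi_\pm=\ddc u^\pm\ge-C_1\omega$. For the $\DSH$-norm, rewrite $\ddc\varphi_\pm=R^\pm+(C_1\omega-\theta)-C_1\omega$ as a difference of positive closed $(1,1)$-currents of masses $\lesssim 1$, while $|\int\varphi_\pm\,\omega^k|$ is uniformly controlled by construction. The main obstacle is purely bookkeeping: ensuring that each of the constants ($C_1$, $C_2$, $c$, $M$) depends only on $(X,\omega)$ and not on $\varphi$. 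No step is conceptually deep---everything reduces to the standard Hodge-theoretic and pluripotential toolkit on compact K\"ahler manifolds.
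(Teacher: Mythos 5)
Your argument is correct and is essentially the standard decomposition the paper outsources to \cite{din-sib-cmh} and \cite[Lemma 2.1]{wu-vergamini-24}: choose near-optimal $R^\pm$, represent their common class by a smooth form $\theta$ with $-C_1\omega\le\theta\le C_1\omega$, solve $R^\pm=\theta+\ddc u^\pm$ with normalized quasi-p.s.h.\ potentials, and use compactness of normalized $C_1\omega$-p.s.h.\ functions plus constancy of pluriharmonic functions to control the constants and shift to make $\varphi_\pm\le0$. All constants indeed depend only on $(X,\omega)$, so the proof is complete as written.
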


 Let $\rho(z):=\widetilde\rho(|z|)$ be a radial function  on $\C^k$ such that 
    $$\widetilde\rho\geq 0, \qquad \widetilde\rho(t)=0 \,\text{ for }\, t\geq 1, \qquad \text{and}\qquad \int_{\C^k} \rho\,\dd \Leb =1.$$
    For $\vep>0$, we set $\rho_\vep (z):=\vep^{-2k}\rho(z/\vep)$. For every function $u$ on an open set $U\subset \C^k$ and every subset $U'\Subset U$, define
    \begin{equation} \label{convolution}
        u_\vep(z):=(u* \rho_\vep)(z)=\int_{|w|\leq 1}u(z-\vep w)\rho(w) \,\dd\Leb(w)  \quad\text{for}\quad z\in U'
    \end{equation}
    provided that $0<\vep<\dist(U',bU)$.

\begin{lemma} \label{lem-u-vep}
Let $U'\Subset U$ be open subsets of $\C^k$
and $u$ a bounded p.s.h.\ function on $U$. For every  $0<\vep<\dist(U',bU)$, we have
    \begin{equation*}
\|u_\varepsilon-u\|_{L^1(U',\Leb)} \lesssim_{U,U'} -\|u\|_{L^\infty(U)}\varepsilon\log{\varepsilon}.
    \end{equation*}
 \end{lemma}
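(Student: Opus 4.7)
The plan is to exploit the classical Riesz formula relating the spherical average of a subharmonic function to its value at the centre. Let $\mu_u := \Delta u$ denote the Riesz measure of $u$ (a positive measure since $u$ is p.s.h., equivalently a dimensional constant times the trace measure of $dd^c u$). Since $\rho$ is radial, a change to polar coordinates $w = r\zeta$ rewrites $u_\vep(z)$ as a positively-weighted average (with total weight $1$) of the spherical means
$$M(z,r) := \frac{1}{|S^{2k-1}|}\int_{S^{2k-1}} u(z+r\zeta)\,\dd\sigma(\zeta), \qquad 0 \le r \le \vep.$$
The sub-mean-value property of p.s.h.\ functions gives $M(z,r) \ge u(z)$, so $u_\vep \ge u$ pointwise on $U'$ and
$$\|u_\vep - u\|_{L^1(U',\Leb)} = \int_{U'}(u_\vep - u)\,\dd\Leb.$$

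The key input is the classical Riesz identity: for $\overline{B(z,r)} \subset U$,
$$M(z,r) - u(z) = c_k \int_0^r \mu_u(B(z,s))\, s^{1-2k}\,\dd s,$$
with $c_k > 0$ depending only on $k$. Substituting this, integrating in $z \in U'$, and applying Fubini twice --- once to swap with $\dd s$, and once to get
$$\int_{U'}\mu_u(B(z,s))\,\dd z = \int |U' \cap B(y,s)|\,\dd\mu_u(y) \le |B(0,s)|\,\mu_u(V) \lesssim s^{2k}\mu_u(V),$$
valid for $s < \dist(U',bU)$, where $V$ is any fixed open set with $U' \Subset V \Subset U$ containing all balls $B(z,s)$ above --- yields
$$\int_{U'}(u_\vep - u)\,\dd\Leb \lesssim_{U,U'} \vep^2\,\mu_u(V).$$

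The remaining step is the classical mass estimate $\mu_u(V) \lesssim_{U,V} \|u\|_{L^\infty(U)}$, obtained by integration by parts: picking $\chi \in C_c^\infty(U)$ with $0 \le \chi \le 1$ and $\chi \equiv 1$ on $V$,
$$\mu_u(V) \le \int \chi\,\dd\mu_u = \int u\,\Delta\chi\,\dd\Leb \le \|u\|_{L^\infty(U)}\,\|\Delta\chi\|_{L^1(U)}.$$
Combining yields the stronger bound $\|u_\vep - u\|_{L^1(U',\Leb)} \lesssim_{U,U'} \vep^2\,\|u\|_{L^\infty(U)}$, which implies the claim: on $(0,1/e]$ one has $\vep^2 \le -\vep\log\vep$, and any larger finite range of $\vep$ is absorbed into the constant. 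All tools invoked are classical, so no step presents a serious obstacle; the only technical care needed is in choosing $V$ and restricting the integration radius $s < \dist(U',bU)$ so that the support of the Fubini-integrand stays inside $U$.
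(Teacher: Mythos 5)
Your proof is correct, but it takes a genuinely different route from the paper's. The paper reduces to a one-variable statement by slicing along complex lines, writes $u$ on each slice as the logarithmic potential of its (cut-off) Riesz mass plus a $\Cc^1$ remainder obtained by integration by parts, and then invokes the $L^1$ modulus of continuity of $\log|z|$ under translations; this controls the full translation differences $\int_K|u(z-w)-u(z)|\,\dd\Leb(z)$ and produces the rate $-\varepsilon\log\varepsilon$. You instead exploit the one-sided inequality $u_\varepsilon\ge u$ (valid since $\rho$ is radial, nonnegative, of total mass $1$), the Riesz spherical-mean identity $M(z,r)-u(z)=c\int_0^r s^{1-2k}\mu_u(B(z,s))\,\dd s$ applied directly in $\C^k$, and a Chern--Levine--Nirenberg-type mass bound $\mu_u(V)\lesssim_{U,V}\|u\|_{L^\infty(U)}$; after Fubini this gives the sharper bound $\|u_\varepsilon-u\|_{L^1(U')}\lesssim_{U,U'}\varepsilon^2\|u\|_{L^\infty(U)}$, which implies the stated estimate in the only regime where it is meaningful (in the paper the lemma is applied with $\varepsilon\le1/2$; note that for $\varepsilon$ close to or above $1$ the right-hand side $-\varepsilon\log\varepsilon$ degenerates, an implicit small-$\varepsilon$ convention shared by the paper's own proof). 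One small point of care: for the constant to depend only on $U,U'$ you should take, say, $\varepsilon\le\frac12\dist(U',bU)$ so that the $\varepsilon$-neighborhood $V$ of $U'$ is compactly contained in $U$ uniformly, and absorb the remaining range into the trivial bound $2\|u\|_\infty|U'|$, as you indicate. What each approach buys: the paper's argument yields $L^1$ control of arbitrary translations (more flexible, though not needed for the statement), while yours is shorter, avoids slicing and works in any dimension at once, and gives a better rate --- although the improvement would not propagate to Proposition \ref{convol}(iii), whose $-1/\log\varepsilon$ rate is dictated by the Błocki--Kołodziej gluing estimate \eqref{bkestimate}, not by this lemma.
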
   
    \proof
    The proof uses standard arguments, but we give it for the reader's convenience. We will proceed in three steps.

   \smallskip
    \noindent
{\bf Step 1.} For every compact set $K\subseteq \C$ and every finite positive measure $\nu$ on $\C$ whose support is compactly contained in a ball $B$ of radius $R$ containing $K$, we have that
\begin{equation} \label{convolution-estimate-2}
    \int_K \big|u_\nu(z-w)-u_\nu(z)\big|\,\diff\Leb(z) \lesssim_{K,R}-\nu(B)\varepsilon\log{\varepsilon}\quad\text{for every}\quad  |w|\le\varepsilon,
\end{equation}
where $u_\nu(z):= \int_\C \log|z-\zeta|\,\diff\nu(\zeta)$.

\medskip

\textit{Proof of Step 1.} We have the following estimate:
$$\int_K \big|\log|z-\varepsilon|-\log|z|\big| \,\diff\Leb(z) \lesssim_{K} -\varepsilon\log{\varepsilon}.$$
Combining it with the definition of $u_\nu$, we get \eqref{convolution-estimate-2}.
    
\medskip    
    \noindent
{\bf Step 2.} For every open set $V\subseteq \C$, every compact set $K\Subset V$, and every function $u$ which is subharmonic and bounded in $V$, we have that
\begin{equation} \label{convestquasifinal}
    \int_K \big|u(z-w)-u(z)\big|\,\diff\Leb(z) \lesssim_{K,V}-\|u\|_{L^\infty(V)}\varepsilon\log{\varepsilon}.
\end{equation}
\textit{Proof of Step 2.} Assume without loss of generality that $\|u\|_{L^\infty(V)}=1$. Let $K_\eta$ be the $\eta$-neighborhood of $K$. Choose $\eta$ sufficiently small to have $K_{3\eta}\Subset V$, and take $\chi_\eta$ a positive smooth cut-off function with $\chi_\eta\restrict{K_{2\eta}}\equiv1$ and $\supp{\chi_\eta}\Subset K_{3\eta}$. Define $\nu$ to be equal to $\chi_\eta\cdot\ddc u$ on $V$ and to $0$ outside of $V$. We have $\nu(B) \lesssim_{K,\eta,V} \|u\|_{L^\infty(V)}$, where $B$ is a large ball containing $K_{3\eta}$. Consider $u_\nu$ defined as in Step 1. Since $\nu$ satisfies the hypothesis of Step 1, $u_\nu$ satisfies inequality \eqref{convolution-estimate-2}. An integration by parts gives
    \begin{align*}
        u_\nu(z)=&\int_\C \log|z-\zeta|\chi_\eta(\zeta)\,\ddc u(\zeta)=
    \int_\C\delta_z\chi_\eta(\zeta)u(\zeta)+\\
    & \int_\C\Big(\log|z-\zeta|\ddc\chi_\eta(\zeta)
        +\d\log|z-\zeta|\wedge\dc\chi_\eta(\zeta)+\d\chi_\eta(\zeta)\wedge\dc\log|z-\zeta|\Big)u(\zeta),
    \end{align*}
    from which it follows that, for every $z\in K_{2\eta}$, $u_\nu(z)-u(z)$ is equal to
    \begin{align} \label{uminusutilde}
       \int_{K_{2\eta}^c\cap K_{3\eta}}\Big(&\log|z-\zeta|\ddc\chi_\eta(\zeta)
        +\d\log|z-\zeta|\wedge\dc\chi_\eta(\zeta)+\d\chi_\eta(\zeta)\wedge\dc\log|z-\zeta|\Big)u(\zeta).
    \end{align}
    Differentiating \eqref{uminusutilde} under the integral sign, we get $\|u-u_\nu\|_{\Cc^1(K_\eta)} \lesssim_{K,\eta} 1$. It follows that
    $$\int_K \big|(u-u_\nu)(z-w)-(u-u_\nu)(z) \big|\,\diff\Leb(z) \lesssim_{K,\eta}\varepsilon\quad\text{for every}\quad |w|\le\varepsilon.$$
    Writing $u=u_\nu+(u_\nu-u)$, we then obtain \eqref{convestquasifinal}.

    \medskip
    
    \noindent
{\bf Conclusion}. Let $u$ be as in the statement. Assume without loss of generality that $\|u\|_{L^\infty(U)}=1$. Take $w\in\C^k$ with $|w|\le\varepsilon$. Setting $z=(\hat{z},z_k)$ with $\hat{z}\in\C^{k-1}$ and $z_k\in\C$, taking $R$ sufficiently large (depending on $U$ and $U'$), and assuming without loss of generality that $w$ has the form $w=(0,w_k)$, we have
    \begin{align}
        \int_{U'}& \big|u(z-w)-u(z)\big|\,\diff\Leb(z) \nonumber\\
        &\le \int_{\D_R^{k-1}}\left(\int_{(\{\hat{z}\}\times\C)\cap U'} \big|u(\hat{z},z_k-w_k)-u(\hat{z},z_k)\big|\,\diff\Leb(z_k)\right)\,\diff\Leb(\hat{z})\nonumber\\
        &\lesssim_{U,U'} -\int_{\D_R^{k-1}} \varepsilon\log{\varepsilon}\,\diff\Leb(\hat{z}) \lesssim_{U,U'}-\varepsilon\log{\varepsilon},\label{eloge-estimate}
    \end{align}
    where in the second inequality we used \eqref{convestquasifinal}. The assertion follows from \eqref{eloge-estimate} and the definition of $u_\varepsilon$.
    \endproof

We will also need the following regularization result. The third item corrects an inaccuracy in \cite[first inequality in (3.1)]{wu-kahler}, which affects the estimate in \cite[Lemma 3.2]{wu-kahler}. Those estimates should be $\|\phi_\varepsilon-\phi\|_{L^1(\omega^k)}\lesssim -1/\log{\varepsilon}$ and $|g(\varepsilon)-g(0)|\lesssim (-1/\log{\varepsilon})^\alpha$ respectively.

\begin{proposition}\label{convol}
    Let $(X,\omega)$ be a compact K\"ahler manifold and $\varphi$ a bounded quasi-p.s.h.\ function  such that $\ddc \varphi\ge-\omega_0$ for some smooth positive closed $(1,1)$-form $\omega_0$. For every $0<\varepsilon\le1/2$, there exists a smooth function $\varphi_\varepsilon$ with $\varphi_\varepsilon\ge\varphi$ and such that:
    \begin{enumerate}[label=(\roman*)]
        \item $\|\varphi_\varepsilon\|_\infty\lesssim\|\varphi\|_\infty$;
        \item $\|\varphi_\varepsilon\|_{\Cc^2}\lesssim \|\varphi\|_\infty\varepsilon^{-2}$;
        \item $\|\varphi_\varepsilon-\varphi\|_{L^1(\omega^k)}\lesssim-\|\varphi\|_\infty/\log{\varepsilon}$;
        \item $\ddc\varphi_\varepsilon\ge-\omega_0$,
    \end{enumerate}
    where the implicit constants depend only on $(X,\omega)$.
\end{proposition}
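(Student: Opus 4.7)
The plan is to build $\varphi_\varepsilon$ by local mollification in coordinate charts and then to glue the local pieces using Demailly's regularized maximum, which preserves lower bounds on $\ddc$. First, I cover $X$ by finitely many coordinate open sets $(U_j,\phi_j)_{j=1}^m$ together with relatively compact opens $U_j'\Subset U_j$ that still cover $X$. On each $U_j$ the $\ddc$-Poincar\'e lemma yields a smooth potential $\psi_j$ with $\omega_0=\ddc\psi_j$ on $U_j$, of $\Cc^k$-norm bounded in terms of $(X,\omega)$. Then $u_j:=\varphi+\psi_j$ is a bounded p.s.h.\ function on $U_j$, with $\|u_j\|_\infty\lesssim\|\varphi\|_\infty$ (we may assume $\|\varphi\|_\infty\ge1$).

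Next, I regularize each $u_j$ by standard convolution in coordinates: $(u_j)_\varepsilon:=u_j\ast\rho_\varepsilon$ on $\phi_j(U_j')$, well-defined once $\varepsilon$ is smaller than the distance from $\phi_j(U_j')$ to $b\phi_j(U_j)$. The convolution of a bounded p.s.h.\ function with a radial non-negative kernel is smooth, p.s.h., and dominates the original, and it satisfies the standard estimates $\|(u_j)_\varepsilon\|_\infty\le\|u_j\|_\infty$, $\|(u_j)_\varepsilon\|_{\Cc^2}\lesssim\|u_j\|_\infty\,\varepsilon^{-2}$, and, by Lemma \ref{lem-u-vep}, $\|(u_j)_\varepsilon-u_j\|_{L^1}\lesssim-\|u_j\|_\infty\,\varepsilon\log\varepsilon$. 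Setting $\varphi_{j,\varepsilon}:=(u_j)_\varepsilon-\psi_j$ gives a smooth function on $U_j'$ with $\varphi_{j,\varepsilon}\ge\varphi$, $\ddc\varphi_{j,\varepsilon}\ge-\omega_0$, and the corresponding $L^\infty$, $\Cc^2$ and $L^1$ bounds, with constants depending only on $(X,\omega)$ (through the fixed cover and the $\psi_j$).

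The main obstacle is then to assemble the local approximations $\varphi_{j,\varepsilon}$ into a \emph{global} smooth function on $X$ while preserving exactly the lower bound $\ddc\varphi_\varepsilon\ge-\omega_0$, with no extra $O(\varepsilon)\omega$ error. A partition-of-unity gluing would break the sign condition on $\ddc$; instead I use Demailly's regularized maximum $\widetilde{\max}_\eta$, a smooth convex combination of translates of the ordinary $\max$, which is smooth when its arguments are, equals the ordinary $\max$ outside an $\eta$-neighborhood of the diagonals, and crucially satisfies $\ddc\widetilde{\max}_\eta(w_1,\dots,w_m)\ge\min_j\ddc w_j$ in the sense of currents. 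By adding large negative constants, or smooth cutoff terms going to $-\infty$ near $bU_j'$, to each $\varphi_{j,\varepsilon}$ so that in a neighborhood of any point of $X$ only the local pieces defined on a full neighborhood of that point contribute to the maximum, and taking $\eta$ small enough that $\widetilde{\max}_\eta$ coincides with $\max$ on the relevant regions, one obtains a globally defined smooth function $\varphi_\varepsilon$ on $X$ inheriting $\varphi_\varepsilon\ge\varphi$ and $\ddc\varphi_\varepsilon\ge-\omega_0$. The estimates (i)-(iv) follow by tracking constants through the construction; in particular (iii) holds \emph{a fortiori} since the local convolution gives the stronger rate $-\|\varphi\|_\infty\,\varepsilon\log\varepsilon$, which for $\varepsilon\le 1/2$ is dominated by $-\|\varphi\|_\infty/\log\varepsilon$. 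The delicate point throughout is the simultaneous control of the cutoffs and shifts used in $\widetilde{\max}_\eta$ so that $\varphi_\varepsilon\ge\varphi$ and (iv) both hold exactly, and (i)-(ii) are preserved with constants depending only on $(X,\omega)$.
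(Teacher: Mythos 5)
Your overall route is the same as the paper's: local convolution in charts followed by a gluing via Demailly's regularized maximum, i.e.\ the scheme of \cite[Theorem 2.1]{din-ma-ngu-ens} together with \cite[Chapter I, Lemma 5.18]{demailly:agbook}. The genuine gap is at the gluing step. Since convolution is coordinate-dependent, the local pieces $\varphi_{j,\varepsilon}$ do not agree on overlaps, and for $\widetilde{\max}_\eta$ to produce a globally defined smooth function you must show that, near $bU_j'$, the $j$-th piece (after subtracting a correction) lies strictly below the competing pieces. Neither of your suggested mechanisms achieves this: subtracting \emph{large negative constants} is incompatible with $\varphi_\varepsilon\ge\varphi$ (at every point some unshifted piece must remain active, and that piece must itself be dominated near the boundary of its own chart --- circular without a quantitative comparison of the pieces), while cutoff terms ``going to $-\infty$ near $bU_j'$'' have unbounded $\ddc$ and ruin (iv) (and (ii)). The working construction subtracts \emph{small} multiples of fixed cutoffs, and the admissible size of these corrections is governed by a uniform bound on the discrepancy between regularizations computed in two different charts. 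For a merely bounded quasi-p.s.h.\ $\varphi$ this is exactly the estimate \eqref{bkestimate}, $\|u_\delta^F-u_\delta\|_{L^\infty(U)}\lesssim -\|u\|_{L^\infty(W)}/\log\delta$, which the paper extracts from the proof of \cite[Lemma 4]{blocki-kolodziej}; nothing in your argument supplies such a bound, and Lemma \ref{lem-u-vep}, being an $L^1$ statement, cannot play this role.

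As a consequence, your final claim that (iii) holds ``a fortiori'' with the stronger rate $-\|\varphi\|_\infty\,\varepsilon\log\varepsilon$ is unjustified, and it is precisely the inaccuracy the remark preceding Proposition \ref{convol} corrects in \cite{wu-kahler}. The local rate of Lemma \ref{lem-u-vep} does not survive the patching: the corrections and the smoothing parameter of the regularized maximum must be taken of size comparable to the cross-chart discrepancy $-1/\log\varepsilon$ (taking the smoothing parameter much smaller destroys the $\Cc^2$ bound (ii), since second derivatives of $\widetilde{\max}_\eta$ grow like $\eta^{-1}$), and tracking these contributions yields only $\|\varphi_\varepsilon-\varphi\|_{L^1(\omega^k)}\lesssim-\|\varphi\|_\infty/\log\varepsilon$; the weaker rate in (iii) is the price of the gluing, not a lazy weakening of the local estimate, and this weaker rate is what dictates the choice $\varepsilon=e^{-(d_p/\delta)^{n/\lambda_0}}$ in the proof of Proposition \ref{key-ineq}. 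A secondary caution: your assertion that the local potentials $\psi_j$ of $\omega_0$ have norms ``bounded in terms of $(X,\omega)$'' is not correct for an arbitrary smooth positive closed $\omega_0$ (in the application $\omega_0=\sum_j(F^{l_j})^*\tilde{\omega}$ grows with the $n_j$'s), so if the $\psi_j$ enter your estimates you must check that they cancel from (i)--(iii), as the uniformity in $\omega_0$ is essential for Corollary \ref{cor-epsilon}.
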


\begin{proof}
    We follow the proof of \cite[Theorem 2.1]{din-ma-ngu-ens}, where the authors cover the more restrictive case where $\varphi$ is also H\"older-continuous. Items (i), (ii) and (iv) can be proved in the same way, as the regularity of $\varphi$ is not used in their proofs. Instead of the desired estimate in item (iii), in \cite{din-ma-ngu-ens} a stronger result is obtained, namely
    \begin{equation}\label{dmn-est}
        \|\varphi_\varepsilon-\varphi\|_\infty\lesssim\|\varphi\|_{\Cc^\alpha} \varepsilon^\alpha  \quad\text{for some}\quad 0<\alpha\le1,
    \end{equation}
    using the H\"older-continuity of $\varphi$. We cannot obtain the same estimate since we assume $\varphi$ only to be bounded. Inequality \eqref{dmn-est} follows from \cite[first inequality in (2.3)]{din-ma-ngu-ens}:
    \begin{equation}\label{dmn-est2}
        \|u_\delta-u\|_{L^\infty(U')}\lesssim\|u\|_{\Cc^\alpha(U)} \delta^\alpha \quad\text{for }U\text{ open and }U'\Subset U,
    \end{equation}
    where $u$ is a p.s.h.\ function defined on a chart that differs from $\varphi$ by a smooth function and $u_\delta$ is the convolution defined as in \eqref{convolution} with $\delta$ instead of $\varepsilon$. Instead of \eqref{dmn-est2}, we use Lemma \ref{lem-u-vep}. This gives local regularized approximations $u_\delta$ satisfying
    $$\|u_\delta-u\|_{L^1(U',\Leb)}\lesssim_{U,U'}-\|u\|_{L^\infty(U)} \delta\log{\delta} \quad\text{for }U\text{ open and }U'\Subset U.$$
    We then need to glue them using charts. In order to be sure that the gluing works, as done in \cite[Theorem 2.1]{din-ma-ngu-ens}, we apply point c) of \cite[Chapter I, Lemma 5.18]{demailly:agbook}. To do this, we need a uniform estimate for the difference of regularizations done using different charts. We take $U$ compactly contained in a chart $W$, let $F:W\rightarrow W'$ be a biholomorphism ($F$ being a change of charts), and put $u^F_\delta:=(u\circ F^{-1})_\delta\circ F$. In order to conclude, one needs to show that
    \begin{equation} \label{bkestimate}
        \|u_\delta^F-u_\delta\|_{L^\infty(U)} \lesssim_{W,W',U} -\|u\|_{L^\infty(W)}/\log{\delta}.
    \end{equation}
    Since we assume that $\varphi$ is bounded, \eqref{bkestimate} follows directly from the proof of \cite[Lemma 4]{blocki-kolodziej}. This completes the proof.
\end{proof}

A positive measure $\nu$ on $X$ is said to be \emph{moderate} if, for every bounded family $\Fc$ of d.s.h.\ functions on $X$, there exist constants $\alpha>0$ and $c>0$ such that 
\begin{equation*}\label{mod}
\nu \big\{z\in X: \,|\psi(z)|>M \big\}\leq ce^{-\alpha M}\qquad\text{for every }M\geq 0\text{ and }\psi\in\Fc,
\end{equation*} 
see \cite{dinh-exponential,dinh-sibony:cime}. Moderate measures are PB. We have the following result, which is proven in the case of $\P^k$ in \cite[Lemma 2.3]{wu-vergamini-24}. The same proof applies in the general case of compact K\"ahler manifolds.

\begin{lemma} \label{tail-split}
Let $\varphi$ be a non-positive d.s.h.\ function on $X$, satisfying $\|\varphi\|_\DSH\le1$ and $\ddc\varphi\ge-\omega$. Let $\nu$ be a moderate measure on $X$. For every $N\ge0$, we can write $\varphi=\varphi^{(N)}_1+\varphi^{(N)}_2$, where $\varphi^{(N)}_1$ is quasi-p.s.h., with:
$$\ddc\varphi_1^{(N)}\ge-\omega,\quad\|\varphi^{(N)}_1\|_\infty\le N, \quad\text{and}\quad\|\varphi^{(N)}_2\|_{L^q(\nu)}\le C_qe^{-\alpha N/q}$$ for every $q\ge1$, where $\alpha>0$ is a constant independent of $\varphi$ and $q$, and $C_q>0$ is a constant independent of $\varphi$.
\end{lemma}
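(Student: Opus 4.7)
The plan is to truncate $\varphi$ from below at the level $-N$. Specifically, I would set
$$\varphi_1^{(N)} := \max(\varphi, -N) \qquad \text{and} \qquad \varphi_2^{(N)} := \varphi - \varphi_1^{(N)}.$$

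First, I would verify the properties required of $\varphi_1^{(N)}$. Since $\varphi \le 0$, the truncation satisfies $-N \le \varphi_1^{(N)} \le 0$, which already gives $\norm{\varphi_1^{(N)}}_\infty \le N$. As the pointwise maximum of two $\omega$-p.s.h.\ functions---namely $\varphi$ (by hypothesis) and the constant $-N$---the function $\varphi_1^{(N)}$ is itself $\omega$-p.s.h., hence quasi-p.s.h.\ with $\ddc \varphi_1^{(N)} \ge -\omega$.

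Next, I would estimate $\varphi_2^{(N)}$ in $L^q(\nu)$. Observe that $\varphi_2^{(N)}$ vanishes on $\{\varphi \ge -N\}$ and equals $\varphi + N$ on $\{\varphi < -N\}$, so pointwise $|\varphi_2^{(N)}| \le |\varphi|\,\mathbf{1}_{\{|\varphi| > N\}}$. To exploit the moderateness of $\nu$, I would apply its definition to the bounded family $\Fc := \{\psi \in \DSH(X) : \norm{\psi}_\DSH \le 1\}$, which yields constants $c, \alpha > 0$ depending only on $\Fc$ and $\nu$ (hence independent of $\varphi$) such that $\nu\{|\varphi| > M\} \le c\, e^{-\alpha M}$ for every $M \ge 0$.

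The layer-cake formula then gives
$$\int |\varphi_2^{(N)}|^q \,\dd\nu \le \int_{\{|\varphi| > N\}} |\varphi|^q\, \dd\nu = N^q\, \nu\{|\varphi| > N\} + q \int_N^\infty M^{q-1}\, \nu\{|\varphi| > M\}\, \dd M.$$
Substituting the tail bound and using the elementary estimate $M^{q-1} \lesssim_q e^{\alpha M/2}$ valid for all $M \ge 0$, both summands are $O_q(e^{-\alpha N/2})$. Taking $q$-th roots and setting $\alpha' := \alpha/2$ yields $\norm{\varphi_2^{(N)}}_{L^q(\nu)} \le C_q\, e^{-\alpha' N/q}$ with $\alpha'$ and $C_q$ independent of $\varphi$, as required.

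The argument is short and direct; the only mild obstacle is to confirm that the truncation preserves the lower bound $\ddc \ge -\omega$ (a standard fact about maxima of $\omega$-p.s.h.\ functions) and to keep track of the $q$-dependence in the layer-cake computation to land on precisely the exponent $\alpha/q$ in the final bound.
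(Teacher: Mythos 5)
Your proof is correct and is essentially the argument the paper invokes by citing \cite[Lemma 2.3]{wu-vergamini-24}: truncation $\varphi_1^{(N)}=\max(\varphi,-N)$, which stays quasi-p.s.h.\ with $\ddc\ge-\omega$, together with the exponential tail bound from moderateness applied to the unit $\DSH$-ball and a layer-cake computation for the $L^q$ norm of the remainder. The only point worth a word is that $\ddc\varphi\ge-\omega$ lets you identify $\varphi$ with a genuine quasi-p.s.h.\ function outside a pluripolar set (which $\nu$ does not charge), so the maximum construction is legitimate; with that noted, your argument matches the paper's.
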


\subsection{Super-potentials of currents on compact K\"ahler manifolds}

Denote by $\Dc_q$ the real space generated by all positive closed $(q,q)$-currents on $X$. Define a norm $\|\cdot\|_*$ on $\Dc_q$ by
$$\|\Omega\|_*:=\min\big\{\|\Omega^+\|+\|\Omega^-\|\},$$
where the minimum is taken over all positive closed currents $\Omega^\pm$ such that $\Omega=\Omega^+-\Omega^-$. Observe that $\|\Omega^\pm\|$ only depend on the cohomology classes of $\Omega^\pm$ in $H^{q,q}(X,\mathbb{R})$.

\smallskip

We will consider the following topology on $\Dc_q$: given a sequence of currents $(S_n)_{n\ge0}$ and a current $S$, we say that the $S_n$'s converge to $S$ if they converge in the sense of currents and $\|S_n\|_*$ is uniformly bounded. We call this topology the \emph{$*$-topology}. By \cite{DS04}, smooth forms are dense in $\Dc_q$ with respect to the $*$-topology. They are also dense in the space $\Dc_q^0$ given by those currents $S\in\Dc_q$ which are exact, i.e., whose cohomology class $\{S\}$ in $H^{q,q}(X,\R)$ is $0$.

For every $0<l<+\infty$, denote by $\|\cdot\|_{\Cc^l}$  the standard $\Cc^l$ norm on the space of differential forms. We consider a norm $\|\cdot\|_{\Cc^{-l}}$ defined by
$$\|S\|_{\Cc^{-l}}:=\sup_{\|\Phi\|_{\Cc^l}\le1}|\langle S,\Phi\rangle|,$$
where the supremum is on smooth $(k-q,k-q)$-forms $\Phi$ on $X$. Observe that, by interpolation \cite{triebel}, for every $0<l<l'<+\infty$ and $m>0$ there exists a positive constant $c_{l,l',m}$ such that
\begin{equation}\label{ineq-dist}
    \|S\|_{\Cc^{-l'}}\le\|S\|_{\Cc^{-l}}\le c_{l,l',m}\|S\|_{\Cc^{-l'}}^{l/l'}\quad\text{for all }S\text{ such that }\|S\|_*\le m.
\end{equation}

\medskip

Following \cite{dinh-sibony:acta,ds-superpot-kahler}, we now recall the definition of the \emph{super-potential} of a current $S\in\Dc_q$. Fix a basis $\{\alpha\}:=\big\{\{\alpha_1\},\dots,\{\alpha_t\}\big\}$ of $H^{q,q}(X,\R)$. We can take all the $\alpha_j$'s to be smooth. For any $R\in\Dc_{k-q+1}^0$, there exists a real $(k-q,k-q)$-current $U_R$ such that $\ddc U_R=R$. We call $U_R$ a \emph{potential} of $R$. After adding some smooth real closed form to $U_R$, we can assume that $U_R$ is \emph{$\alpha$-normalized}, i.e., that $\langle U_R,\alpha_j\rangle=0$ of all $1\le j\le t$. We can choose $U_R$ smooth if $R$ is smooth. The \emph{$\alpha$-normalized super-potential} $\Uc_S$ of $S$ is the linear functional on the smooth forms in $\Dc_{k-q+1}^0$ which is defined by
$$\Uc_S(R):=\langle S,U_R\rangle.$$
Note that $\Uc_S(R)$ does not depend on the choice of $U_R$.

\smallskip

We say that $S$ has a \emph{continuous super-potential} if $\Uc_S$ can be extended continuously to a linear functional on all of $\Dc_{k-q+1}^0$ with respect to the $*$-topology. If $S\in\Dc_q^0$, then $\Uc_S$ does not depend on the choice of $\alpha$. If $S$ is smooth, then it has a continuous super-potential and for every $R\in\Dc_{k-q+1}^0$ we have $\Uc_S(R)=\Uc_R(S)$, where $\Uc_R$ is the super-potential of $R$. The equality still holds if we only assume that $S$ has a continuous super-potential, see \cite{ds-superpot-kahler}.

\begin{definition} \label{holder-superpot}
    Take $S\in\Dc_q$. For $l>0,0<\lambda\le1$, and $M>0$, we say that a super-potential $\Uc_S$ of $S$ is \emph{$(l,\lambda,M)$-H\"older-continuous} if it is continuous and we have
    $$|\Uc_S(R)|\le M\|R\|_{\Cc^{-l}}^\lambda\quad\text{for every }R\in\Dc_{k-q+1}^0\text{ with }\|R\|_*\le1.$$
\end{definition}
If $S$ is such that $\Uc_S$ is $(l,\lambda,M)$-H\"older-continuous, \eqref{ineq-dist} implies that $\Uc_S$ is also $(l' , \lambda', M')$-H\"older-continuous for every $l'>0$ and some constants $\lambda'$ and $M'$ which depend on $\lambda,M,l',l$, but are independent of $S$. Deﬁnition \ref{holder-superpot} does not depend on the normalization of the super-potential.

\section{Mixing for d.s.h.\ functions} \label{sec-mixing}

In this section, we are going to prove Theorem \ref{thm-mixing}. We follow the general strategy of \cite{wu-vergamini-24}, but we cannot use results about p.s.h.\ functions in the K\"ahler setting. We use instead the techniques from Section \ref{defs}.

\subsection{Mixing for bounded quasi-p.s.h.\ functions} \label{mixing-quasipsh}

Recall that $f$ is a holomorphic automorphism of $X$ with simple action on cohomology, we denote by $d_p$ and $\delta_0$ its main dynamical degree and its auxiliary dynamical degree, by $T_+$ and $T_-$ the Green currents of $f$ and $f^{-1}$ respectively, and by $\mu=T_+\wedge T_-$ the equilibrium measure of $f$. From \cite{DS05,ds-superpot-kahler} we have $f^*(T_+)=d_pT_+$ and $f_*(T_-)=d_pT_-$. Moreover, for every positive closed $(p,p)$-current (respectively, $(k-p,k-p)$-current) $S$ of mass $1$, we have that $d_p^{-n}(f^n)^*(S)$ converges to $T_+$ (respectively, $d_{k-p}^{-n}f^n_*(S)$ converges to $T_-$). We also have that $T_+$ (respectively, $T_-$) is the unique positive closed current in the class $\{T_+\}$ (respectively, $\{T_-\}$).

\medskip

We start establishing a weaker version of Theorem \ref{thm-mixing}  for bounded quasi-p.s.h.\ functions.

\begin{proposition} \label{prop-mixing-bounded}
There exists $\delta_0<\delta<d_p$ such that for every $\kappa\in\mathbb{N}^*$ there exists a constant $C_\kappa>0$ such that, for every   $\kappa+1$ bounded quasi-p.s.h.\ functions $g_0,g_1, \dots,g_\kappa$, and every $0=n_0\leq n_1\leq \ldots\leq n_\kappa$, we have
$$\Big|\int g_0  ( g_1\circ f^{n_1}  ) \cdots (g_\kappa \circ f^{n_\kappa}  )\,\dd\mu -\prod_{j=0}^\kappa \int  g_j  \,\dd\mu\Big|\leq C_\kappa \Big(\frac{\delta}{d_p}\Big)^{\min_{0\leq j\leq \kappa-1}(n_{j+1}-n_j)/2}  \prod_{j=0}^\kappa \norm{g_j}_\qpsh,$$
where we set
$$\|g\|_\qpsh:=\|g\|_\infty+\inf \big\{c\ge0\mid \ddc g\ge -c\omega\big\}\quad\text{for every }g:X\to \R.$$
\end{proposition}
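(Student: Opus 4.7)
The plan is to prove the result by induction on $\kappa$, with base case $\kappa = 1$ essentially given by the two-observable result in \cite{wu-kahler}, and to handle the inductive step by combining the grouping idea of \cite{DS10CM} with the regularization scheme from Proposition \ref{convol} and the existing smooth mixing result from \cite{bd-kahler}.

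First, I would normalize the setup by assuming $\|g_j\|_\infty \le 1$, $g_j \le 0$, and $\ddc g_j \ge -\omega$ for every $j$. Let $m$ be an index achieving $N := \min_{0 \le j \le \kappa - 1}(n_{j+1} - n_j) = n_{m+1} - n_m$. Using $\mu$-invariance to re-center time, I would split the product at $m$ as
$$\int \prod_{j=0}^\kappa g_j \circ f^{n_j}\,\dd\mu \;=\; \int \Phi_- \cdot (\Phi_+ \circ f^N)\,\dd\mu,$$
where $\Phi_- = \prod_{j\leq m} g_j \circ f^{n_j-n_m}$ involves only backward iterates and $\Phi_+ = \prod_{j\geq m+1} g_j \circ f^{n_j-n_{m+1}}$ involves only forward iterates. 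The strategy is to apply an effective two-observable mixing estimate to the pair $(\Phi_-, \Phi_+)$ after a suitable regularization.

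Next, for a parameter $\varepsilon>0$ to be optimized, I would regularize each $g_j$ via Proposition \ref{convol} to obtain smooth $g_j^\varepsilon \geq g_j$ with $\|g_j^\varepsilon\|_{\Cc^2} \lesssim \varepsilon^{-2}$, $\|g_j^\varepsilon\|_\infty \lesssim 1$, and $\|g_j^\varepsilon - g_j\|_{L^1(\omega^k)} \lesssim -1/\log\varepsilon$. For the smooth observables $g_j^\varepsilon$, the $\Cc^2$ mixing of all orders from \cite{bd-kahler} yields
$$\Bigl|\int \prod g_j^\varepsilon \circ f^{n_j}\,\dd\mu - \prod \int g_j^\varepsilon\,\dd\mu\Bigr| \;\lesssim_\kappa\; \theta^N \,\varepsilon^{-2(\kappa+1)}$$
for some $\theta\in(\delta_0/d_p,1)$. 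For the regularization error, telescoping the difference $\prod g_j\circ f^{n_j} - \prod g_j^\varepsilon\circ f^{n_j}$ reduces matters to estimating $\int |g_j-g_j^\varepsilon|\,\dd\mu$. Since $\mu$ is moderate and PB, and the differences $g_j^\varepsilon - g_j$ form a bounded d.s.h.\ family whose $L^1(\omega^k)$ norms are $\lesssim -1/\log\varepsilon$, the H\"older-type regularity of $\mu$ (exploiting the exponential integrability from Lemma \ref{tail-split} together with a logarithmic interpolation) gives $\int |g_j-g_j^\varepsilon|\,\dd\mu \lesssim (-1/\log\varepsilon)^\alpha$ for some $\alpha>0$. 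Balancing the two errors by choosing $\varepsilon$ as a suitable power of $\theta^N$ yields the rate $(\delta/d_p)^{N/2}$ after adjusting $\delta$ slightly above $\delta_0$.

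The main obstacle I anticipate is the precise H\"older-type regularity estimate for $\mu$ against bounded quasi-p.s.h.\ differences with small $L^1(\omega^k)$ norm, which needs to be sharp enough so that the logarithmic loss from Proposition \ref{convol} can be absorbed into the exponential gain from the smooth mixing. A secondary subtlety is that applying mixing directly to the pair $(\Phi_-,\Phi_+)$ would require controlling their super-potentials or $\Cc^2$-norms, and the pull-back operator $(f^n)^*$ blows up these norms exponentially; the regularize-then-mix strategy above avoids this by keeping each factor on the "observable" side rather than the "current" side, so that only the smooth-observable mixing needs to be invoked.
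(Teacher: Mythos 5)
Your regularize-then-apply-smooth-mixing strategy has a quantitative gap that is fatal for an \emph{exponential} rate. After regularizing each $g_j$ via Proposition \ref{convol}, the only available control on the error is logarithmic: $\|g_j^\varepsilon-g_j\|_{L^1(\omega^k)}\lesssim -1/\log\varepsilon$, and hence (even granting your H\"older-type estimate for $\mu$, which by the way requires the H\"older continuity of the super-potential of $T_+\wedge T_-$ rather than moderateness alone) $\int|g_j^\varepsilon-g_j|\,\dd\mu\lesssim(-1/\log\varepsilon)^\alpha$. Your two error terms are therefore $\theta^N\varepsilon^{-2(\kappa+1)}$ and $(-1/\log\varepsilon)^\alpha$. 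To make the second term exponentially small in $N$ you must take $\varepsilon\le e^{-c^{N}}$ for some $c>1$, and then $\varepsilon^{-2(\kappa+1)}$ is doubly exponential and destroys $\theta^N$; conversely, keeping the first term under control forces $\varepsilon\gtrsim\theta^{N/(2(\kappa+1))}$, so the second term is only $O(N^{-\alpha})$. No choice of $\varepsilon$ balances them: this scheme yields at best a polynomial mixing rate, not the claimed $(\delta/d_p)^{N/2}$. This is precisely the obstruction that makes the bounded quasi-p.s.h.\ case harder than the H\"older case of \cite{bd-kahler,DS10CM}, where the regularization error is a power of $\varepsilon$ and the balancing does work.

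The paper circumvents this by never paying the $\varepsilon^{-2}$ cost against the mixing estimate. It works on $X\times X$ with $F(z,w)=(f(z),f^{-1}(w))$, forms the quasi-p.s.h.\ combinations $\Phi^\pm$ of \eqref{phipm} (the extra terms $(\kappa+1)\widetilde G_j+\tfrac\kappa2\widetilde G_j^2$ make $\ddc\Phi^\pm\gtrsim_\kappa-\omega_0$, Lemma \ref{ddc-positive}), regularizes $\Phi^\pm$ once, and shows via $F^*\mathbb T_+=d_p^2\mathbb T_+$ that $\|\ddc\Phi^\pm_\varepsilon\wedge\mathbb T_+\|_*$ is bounded \emph{uniformly in $\varepsilon$ and the $n_j$'s} (Lemma \ref{lem-star-bound}). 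Feeding this into the super-potential estimates (Proposition \ref{key-ineq}), the $\Cc^2$-blow-up of $\Phi^\pm_\varepsilon$ enters only through a factor $-\log\varepsilon$ multiplying $(\delta_0/d_p)^n$, not through $\varepsilon^{-2}$; the one-sided comparison $\Phi^\pm_\varepsilon\ge\Phi^\pm$ together with the $\pm$ signs handles the unregularized pairing. One can then afford the doubly exponentially small choice $\varepsilon=e^{-(d_p/\delta)^{n/\lambda_0}}$, making both the $(-1/\log\varepsilon)^{\lambda_0}$ term and the $-(\delta_0/d_p)^n\log\varepsilon$ term exponentially small, and conclude by decoupling at the gap $n_1$ via the diagonal current $[\Delta]$ and inducting on $\kappa$. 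If you want to salvage your outline, you must import this structural idea: exploit the quasi-plurisubharmonicity of a corrected combination of the observables so that the current $\ddc\Phi^\pm_\varepsilon\wedge\mathbb T_+$ has mass bounded independently of $\varepsilon$, and run the convergence $d_p^{-2n}F^n_*[\Delta]\to\mathbb T_-$ through super-potentials rather than through the smooth-observable mixing statement.
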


Observe that, by linearity, Proposition \ref{prop-mixing-bounded} also holds if we assume that for every $j$ either $g_j$ or $-g_j$ is quasi-p.s.h.. Observe also that it is already stronger than \cite[Theorem 1.2]{bd-kahler}.

\smallskip

The explicit choice of $\delta$ will be made later. Specifically, we will find $\delta_0<\delta'<d_p$ such that the statement holds for every $\delta'<\delta<d_p$, see \eqref{deltaprime} below.

\medskip

Consider now the K\"ahler manifold $X\times X$ equipped with the K\"ahler form $\tilde{\omega}=\pi_1^*\omega+\pi_2^*\omega$, where $\pi_1,\pi_2$ are the canonical projections of $X\times X$ onto its factors. Define a new automorphism of  $X\times X$ by $$F(z,w):=\big(f(z),f^{-1}(w)\big).$$ Using K\"unneth formula, one can show that the dynamical degree of order $k$ of $F$ is equal to $d_p^2$ (see also \cite[Section 4]{DS10CM}), which is an eigenvalue of multiplicity $1$ of $F^*$, and that all the others dynamical degrees and the eigenvalues of $F^*$ on $H^{k,k}(X\times X,\mathbb{R})$, except for $d_p^2$, are strictly smaller than $d_p\delta_0$. Hence $F$ and $d_p\delta_0$ satisfy the same conditions of $f$ and $\delta_0$.

It is not hard to see that the Green $(k,k)$-currents of $F$ and $F^{-1}$ are $\mathbb T_+:=T_+\otimes T_-$ and $\mathbb T_-:= T_-\otimes T_+$ respectively (see \cite[Section 4.1.8]{federer} for the tensor product of currents) and that they satisfy
$$F^*(\mathbb T_+)=d_p^2\mathbb T_+\quad\text{and}\quad F_*(\mathbb T_-)=d_p^2\mathbb T_-.$$
In particular, they have $(1,\lambda,M)$-H\"older-continuous super-potentials for some $M>0$ and $0<\lambda\le1$, see \cite[Lemma 4.2.5]{ds-superpot-kahler}. Let $\Delta$ denote the diagonal of $X\times X$. Then $[\Delta]$ is a positive closed $(k,k)$-current on $X\times X$.

\smallskip

When proving Proposition \ref{prop-mixing-bounded}, we can assume without loss of generality that $\|g_j\|_\qpsh\le1$ for every $j$, which implies $\|g_j\|_\infty\le1$ and $\ddc g_j\ge-\omega$. On $X\times X$, we define
$$G_0(z,w):=g_0(w)\quad\text{and}\quad G_j(z,w)=g_j(z) \,\text{ for }\, j\ge 1.$$
Notice that the $G_j$'s are quasi-p.s.h.\ for every $j$, and they satisfy $\|G_j\|_\infty=\|g_j\|_\infty$. Since $\ddc g_j\ge-\omega$ for every $j$, we also have that $\ddc G_j\ge-\tilde{\omega}$.

Set $l_0:=0$ and $l_j:=n_j-n_1$ for $j\geq1$, and set $\widetilde G_j:=G_j\circ F^{l_j}$ for every $j$. Define the auxiliary quasi-p.s.h.\ functions $\Phi^\pm$ on $X\times X$ by
\begin{equation}\label{phipm}
\Phi^\pm:=\Phi^\pm_{n_0,\dots,n_\kappa}=\sum_{j=0}^\kappa \Big((\kappa+1)\widetilde{G}_j+\frac{\kappa}{2}\widetilde{G}^2_j\Big)\pm \prod_{j=0}^\kappa \widetilde{G}_j.
\end{equation}
As in \cite{wu-vergamini-24}, these two functions will play a very important role in the proof of Proposition \ref{prop-mixing-bounded}. We have the following estimate for $\ddc\Phi^\pm$.

\begin{lemma} \label{ddc-positive}
   We have $$\ddc \Phi^\pm \gtrsim_\kappa -\displaystyle\sum_{j=0}^\kappa(F^{l_j})^*\tilde{\omega}=:-\omega_0,$$
   where the implicit constant is independent of $n_1,\dots,n_\kappa,g_0,\dots,g_\kappa$.
\end{lemma}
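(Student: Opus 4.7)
The plan is to expand $\ddc \Phi^\pm$ explicitly and exhibit a cancellation: the mixed terms arising from $\pm\ddc \prod_{j}\widetilde G_j$ will be absorbed by the diagonal contributions $du_j \wedge \dc u_j$ produced by the squares $\tfrac{\kappa}{2}\widetilde G_j^{2}$. Since each $\widetilde G_j = G_j \circ F^{l_j}$ is a bounded quasi-p.s.h.\ function satisfying $\ddc \widetilde G_j = (F^{l_j})^*\ddc G_j \ge -(F^{l_j})^*\tilde\omega$, I would first carry out the manipulations below for smooth functions and then extend to the present setting by regularizing each $G_j$ via Proposition \ref{convol} (keeping $\ddc G_j^{(\varepsilon)} \ge -\tilde\omega$) and passing to the limit in the sense of currents.

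Writing $u_j := \widetilde G_j$, $\Pi := \prod_{j=0}^{\kappa} u_j$, $\Pi_j := \prod_{i \ne j} u_i$ and $\Pi_{ij} := \prod_{l \ne i,j} u_l$, a direct computation gives
$$
\ddc\!\Big((\kappa+1) u_j + \tfrac{\kappa}{2} u_j^2\Big) = \big[(\kappa+1) + \kappa u_j\big] \ddc u_j + \kappa \, du_j \wedge \dc u_j,
$$
and
$$
\ddc \Pi = \sum_{j=0}^{\kappa} \Pi_j \ddc u_j + \sum_{0 \le i < j \le \kappa} \Pi_{ij}\big(du_i \wedge \dc u_j + du_j \wedge \dc u_i\big).
$$
The crucial step is the control of the mixed terms. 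From $d(u_i \pm u_j) \wedge \dc(u_i \pm u_j) \ge 0$ one deduces the $(1,1)$-form inequality $\pm(du_i \wedge \dc u_j + du_j \wedge \dc u_i) \le du_i \wedge \dc u_i + du_j \wedge \dc u_j$. Combined with $|\Pi_{ij}| \le 1$ and the fact that each index $j$ belongs to exactly $\kappa$ unordered pairs $\{i,j\}$, this gives, for either choice of sign,
$$
\pm\sum_{0 \le i < j \le \kappa} \Pi_{ij}\big(du_i \wedge \dc u_j + du_j \wedge \dc u_i\big) \ge -\kappa \sum_{j=0}^{\kappa} du_j \wedge \dc u_j,
$$
which exactly cancels the $\kappa \sum_j du_j \wedge \dc u_j$ contribution coming from the squares. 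I would then be left with
$$
\ddc \Phi^\pm \ge \sum_{j=0}^{\kappa} \big[(\kappa+1) + \kappa u_j \pm \Pi_j\big] \ddc u_j.
$$

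To conclude, the bounds $|u_j| \le 1$ and $|\Pi_j| \le 1$ force $0 \le (\kappa+1) + \kappa u_j \pm \Pi_j \le 2\kappa+2$, so applying $\ddc u_j \ge -(F^{l_j})^*\tilde\omega$ after multiplication by these non-negative coefficients yields
$$
\ddc \Phi^\pm \ge -(2\kappa+2) \sum_{j=0}^{\kappa} (F^{l_j})^* \tilde\omega = -(2\kappa+2)\,\omega_0,
$$
as required. The main obstacle will be the cross-term cancellation: the coefficient $\kappa/2$ in front of $\widetilde G_j^2$ is sharply calibrated so that the diagonal squared-gradient contribution from the squares exactly matches the worst-case lower bound on the mixed terms in $\ddc\Pi$; any smaller coefficient would leave an uncompensated negative residue. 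A secondary technical point is making sure the Bedford--Taylor wedge products of bounded quasi-p.s.h.\ functions used above are handled rigorously, which is the role of the regularization step.
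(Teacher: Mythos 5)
Your argument is correct and is essentially the paper's own proof: the same expansion of $\ddc\Phi^\pm$, the same rearrangement inequality coming from $d(u_i\pm u_j)\wedge\dc(u_i\pm u_j)\ge0$ to absorb the mixed terms into the squared-gradient terms produced by the $\tfrac{\kappa}{2}\widetilde G_j^2$, and the same observation that the coefficients $(\kappa+1)+\kappa\widetilde G_j\pm\prod_{s\ne j}\widetilde G_s$ lie in $[0,2\kappa+2]$ so that $\ddc\widetilde G_j\ge-(F^{l_j})^*\tilde\omega$ gives the bound. Your extra regularization step is a harmless (and reasonable) technical safeguard that the paper simply omits.
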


\begin{proof}
    Remember that the inequality $i\del(g\pm h)\wedge\dbar(g\pm h)\ge0$, which is valid for all bounded d.s.h.\ functions $g$ and $h$, implies
    \begin{equation} \label{rearrangement}
    \pm(i\del g\wedge\dbar h+i\del h\wedge\dbar g)\ge-(i\del g\wedge\dbar g+i\del h\wedge\dbar h).
    \end{equation}
    From \eqref{rearrangement}, it follows that we have
    \begin{align}
        i\ddbar\Phi^\pm=&\sum_{j=0}^\kappa i\ddbar\widetilde{G}_j\Big(\kappa+1+\kappa\widetilde{G}_j\pm\prod_{s \not=j}\widetilde{G}_s\Big)+\kappa\sum_{j=0}^\kappa i\del\widetilde{G}_j\wedge\dbar\widetilde{G}_j
        \pm \sum_{j\not=s}\Big(i\del \widetilde{G}_j\wedge\dbar \widetilde{G}_s\prod_{t\neq j,s}\widetilde{G}_t\Big)  \nonumber\\
        & \ge \sum_{j=0}^\kappa i\ddbar\widetilde{G}_j\Big(\kappa+1+\kappa\widetilde{G}_j\pm\prod_{s\not=j}\widetilde{G}_s\Big)+\sum_{j=0}^\kappa i\del \widetilde{G}_j\wedge\dbar \widetilde{G}_j\bigg(\kappa-\sum_{s\not=j}  \Big(\prod_{t\neq j,s}|\widetilde{G}_t| \Big)\bigg).\label{ddc1}
    \end{align}
    Using the fact that $\|G_j\|_\infty=\|g_j\|_\infty\le1$ and $\ddc G_j\ge-\tilde{\omega}$ for every $j$, we get
    \begin{align}
        \sum_{j=0}^\kappa &i\ddbar\widetilde{G}_j\Big(\kappa+1+\kappa\widetilde{G}_j\pm\prod_{s\not=j}\widetilde{G}_s\Big)+\sum_{j=0}^\kappa i\del \widetilde{G}_j\wedge\dbar \widetilde{G}_j\bigg(\kappa-\sum_{s\not=j}  \Big(\prod_{t\neq j,s}|\widetilde{G}_t| \Big)\bigg)\nonumber\\
        &\ge \sum_{j=0}^\kappa i\ddbar\widetilde{G}_j\Big(\kappa+1+\kappa\widetilde{G}_j\pm\prod_{s\not=j}\widetilde{G}_s\Big) \gtrsim_\kappa -\sum_{j=0}^\kappa(F^{l_j})^*\tilde{\omega}.\label{ddc2}
    \end{align}
    The assertion follows from \eqref{ddc1} and \eqref{ddc2}.
\end{proof}

We deduce from the above lemma the following result, which is obtained applying Proposition \ref{convol} to the functions $\Phi^\pm$.

\begin{corollary} \label{cor-epsilon}
For every $0<\varepsilon\le1/2$, there are two regularized functions $\Phi^\pm_\varepsilon$ with $\Phi^\pm_\varepsilon\ge\Phi^\pm$ and such that:
\begin{enumerate}[label=(\roman*)]
    \item $\|\Phi^\pm_\varepsilon\|_\infty\lesssim_\kappa 1$;
    \item $\|\Phi^\pm_\varepsilon\|_{\Cc^2} \lesssim_\kappa \varepsilon^{-2}$;
    \item $\|\Phi^\pm_\varepsilon-\Phi^\pm\|_{L^1(\tilde{\omega}^{2k})}\lesssim_\kappa -1/\log{\varepsilon}$;
    \item $\ddc\Phi^\pm_\varepsilon\gtrsim_\kappa-\omega_0$.
\end{enumerate}
\end{corollary}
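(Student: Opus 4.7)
The plan is to apply Proposition \ref{convol} on the compact K\"ahler manifold $(X\times X,\tilde{\omega})$ directly to each of $\Phi^+$ and $\Phi^-$ defined in \eqref{phipm}. The hypotheses are easy to verify. Since $\|g_j\|_\infty\le 1$, each $\widetilde{G}_j=G_j\circ F^{l_j}$ is bounded by $1$, so expanding \eqref{phipm} and using the triangle inequality gives $\|\Phi^\pm\|_\infty\lesssim_\kappa 1$ uniformly in the $n_j$'s. The required curvature lower bound is precisely the content of Lemma \ref{ddc-positive}, which produces a constant $c_\kappa>0$ such that $\ddc\Phi^\pm\ge -c_\kappa\omega_0$, where $\omega_0:=\sum_{j=0}^\kappa(F^{l_j})^*\tilde{\omega}$ is a smooth positive closed $(1,1)$-form on $X\times X$. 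Thus $\Phi^\pm$ are bounded quasi-p.s.h., and Proposition \ref{convol} applies with reference form $c_\kappa\omega_0$ in place of $\omega_0$.

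The four items of the corollary then follow one-to-one from the four items of the proposition combined with $\|\Phi^\pm\|_\infty\lesssim_\kappa 1$. Item (i) is immediate; item (ii) gives $\|\Phi^\pm_\varepsilon\|_{\Cc^2}\lesssim\|\Phi^\pm\|_\infty\varepsilon^{-2}\lesssim_\kappa\varepsilon^{-2}$; item (iii) gives $\|\Phi^\pm_\varepsilon-\Phi^\pm\|_{L^1(\tilde{\omega}^{2k})}\lesssim -\|\Phi^\pm\|_\infty/\log\varepsilon\lesssim_\kappa -1/\log\varepsilon$; and item (iv) yields $\ddc\Phi^\pm_\varepsilon\ge -c_\kappa\omega_0\gtrsim_\kappa-\omega_0$.

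The only subtlety worth flagging is that Proposition \ref{convol} allows the reference form to be \emph{any} smooth positive closed $(1,1)$-form while its implicit constants depend only on the ambient K\"ahler manifold $(X\times X,\tilde{\omega})$. This uniformity is essential here, because the form $\omega_0$ depends on the iterates $l_j=n_j-n_1$, and without it the constants in the corollary would deteriorate as the $n_j$'s grow. Beyond this bookkeeping there is no real obstacle: the corollary is just Lemma \ref{ddc-positive} passed through the regularization machinery of Proposition \ref{convol}.
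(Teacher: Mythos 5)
Your proposal is correct and is exactly the paper's argument: the corollary is obtained by applying Proposition \ref{convol} on $(X\times X,\tilde{\omega})$ to the bounded quasi-p.s.h.\ functions $\Phi^\pm$, whose curvature bound $\ddc\Phi^\pm\gtrsim_\kappa-\omega_0$ comes from Lemma \ref{ddc-positive} and whose sup-norm bound comes from $\|g_j\|_\infty\le1$. Your remark on the uniformity of the implicit constants in $\omega_0$ (hence in the $n_j$'s) is precisely the point that makes the application legitimate, and it is guaranteed by the statement of Proposition \ref{convol}.
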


We have the following lemma about the functions $\Phi^\pm_\varepsilon$. As in \cite[Lemma 3.4]{wu-vergamini-24}, a delicate point of this estimate is the independence of the $n_j$'s. Furthermore, here we also need the independence of $\varepsilon$, which is a consequence of the above corollary. Moreover, we will see here the crucial role of the assumption on the simple action on cohomology of $f$. This is implicitly used in \cite{wu-vergamini-24} as every H\'enon-Sibony map satisfies this condition.

\begin{lemma} \label{lem-star-bound}
For every $0<\varepsilon\le1/2$, we have $\norm{\ddc\Phi^\pm_\varepsilon\wedge\mathbb T_+}_* \leq c_\kappa$ for some constant $c_\kappa>0$ which is independent of $n_1,\dots, n_\kappa$, $g_0,\dots,g_\kappa$, and $\varepsilon$.
\end{lemma}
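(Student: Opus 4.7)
The strategy is to split $\ddc\Phi^\pm_\varepsilon$ as a difference of two smooth positive closed $(1,1)$-forms with controlled cohomology class, so that the $*$-norm reduces to a purely cohomological pairing which I then compute explicitly using the product structure $\mathbb T_+=\pi_1^*T_+\wedge\pi_2^*T_-$ together with the eigenvalue properties $f^*T_+=d_pT_+$ and $(f^{-1})^*T_-=d_pT_-$.

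More precisely, item~(iv) of Corollary~\ref{cor-epsilon} yields a constant $C_\kappa>0$ such that the forms $R^+_\varepsilon:=\ddc\Phi^\pm_\varepsilon+C_\kappa\omega_0$ and $R^-_\varepsilon:=C_\kappa\omega_0$ are both smooth, positive and closed, and both represent the class $C_\kappa\{\omega_0\}$ in $H^{1,1}(X\times X,\R)$ since $\ddc\Phi^\pm_\varepsilon$ is exact. Since the mass of a positive closed current depends only on its cohomology class, wedging with $\mathbb T_+$ yields
\begin{equation*}
\|\ddc\Phi^\pm_\varepsilon\wedge\mathbb T_+\|_*\le\|R^+_\varepsilon\wedge\mathbb T_+\|+\|R^-_\varepsilon\wedge\mathbb T_+\|=2C_\kappa\big\langle\omega_0\wedge\mathbb T_+,\tilde\omega^{k-1}\big\rangle.
\end{equation*}
This bound is already manifestly independent of $\varepsilon$, so it remains to show $\langle\omega_0\wedge\mathbb T_+,\tilde\omega^{k-1}\rangle\lesssim_\kappa1$ uniformly in the $l_j$'s.

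Writing $\omega_0=\sum_{j=0}^\kappa(F^{l_j})^*\tilde\omega$ reduces the task to a uniform bound on $\langle(F^l)^*\tilde\omega\wedge\mathbb T_+,\tilde\omega^{k-1}\rangle$ for $l\ge0$. Expanding $(F^l)^*\tilde\omega=\pi_1^*(f^l)^*\omega+\pi_2^*(f^{-l})^*\omega$ and $\tilde\omega^{k-1}$ via Künneth, a bidegree count isolates exactly two surviving terms, one from each summand, which reduce the problem to estimating the scalar integrals
\begin{equation*}
\int_X(f^l)^*\omega\wedge T_+\wedge\omega^{k-p-1}\quad\text{and}\quad\int_X(f^{-l})^*\omega\wedge T_-\wedge\omega^{p-1}.
\end{equation*}
Substituting $T_+=d_p^{-l}(f^l)^*T_+$ in the first integral and the analogous identity $T_-=d_p^{-l}(f^{-l})^*T_-$ in the second, then applying the adjoint identity $\int\phi^*\alpha\wedge\beta=\int\alpha\wedge\phi_*\beta$, one reaches expressions controlled respectively by $d_p^{-l}\|(f^l)_*\omega^{k-p-1}\|$ and $d_p^{-l}\|(f^l)^*\omega^{p-1}\|$.

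The main obstacle is the uniform boundedness in $l$, which is precisely where the simple action hypothesis enters. By Poincaré duality, the two norms above grow like $d_{p+1}(f)^l$ and $d_{p-1}(f)^l$ up to polynomial factors from possible Jordan blocks. The strict gap $\max(d_{p-1},d_{p+1})<d_p$ then makes both integrals bounded by constants times $(d_{p\pm1}/d_p)^l$, uniformly in $l$. Summing over $j=0,\dots,\kappa$ yields the desired constant $c_\kappa$ depending only on $\kappa$ and $(X,\omega)$.
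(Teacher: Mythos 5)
Your proof is correct, and its first half is essentially the paper's argument: you use Corollary \ref{cor-epsilon}~(iv) to write $\ddc\Phi^\pm_\varepsilon\wedge\mathbb T_+$ as a difference of two positive closed currents whose classes coincide (because $\ddc\Phi^\pm_\varepsilon$ is exact), so that the $*$-norm is bounded by twice the mass of $C_\kappa\,\omega_0\wedge\mathbb T_+$, manifestly uniform in $\varepsilon$; this is exactly the decomposition $(\ddc\Phi^\pm_\varepsilon\wedge\mathbb T_+ +\tilde c_\kappa\Omega_0)-\tilde c_\kappa\Omega_0$ used in the paper. Where you diverge is the uniform-in-$l_j$ bound on $\|(F^{l})^*\tilde\omega\wedge\mathbb T_+\|$: the paper stays on $X\times X$, uses the invariance $F^*(\mathbb T_+)=d_p^2\,\mathbb T_+$ to write $(F^{l})^*\tilde\omega\wedge\mathbb T_+=d_p^{-2l}(F^{l})^*(\tilde\omega\wedge\mathbb T_+)$, and then invokes $d_{k+1}(F)\le d_p\delta_0$ to get the mass bound $(\delta_0/d_p)^{l}$; you instead expand $(F^{l})^*\tilde\omega$ and $\tilde\omega^{k-1}$ through the product structure, isolate the two surviving bidegree terms, and reduce to the growth rates of $(f^l)^*$ on $H^{p\pm1,p\pm1}(X,\R)$ against $d_p^l$ coming from $f^*T_+=d_pT_+$ and $f_*T_-=d_pT_-$. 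The two routes rest on the same spectral input (the paper's bound on $d_{k+1}(F)$ is itself a K\"unneth computation), so yours is essentially an unpacked, more self-contained version: it avoids citing the dynamical degrees of $F$ at the cost of the bidegree bookkeeping, while the paper's version is shorter and records the sharper decay $(\delta_0/d_p)^{l_j}$ rather than mere boundedness. One step you leave implicit, at the same level of informality as the paper, is that "controlled by $d_p^{-l}\|(f^l)_*\omega^{k-p-1}\|$" uses the standard fact that the pairing of the fixed positive closed current $\omega\wedge T_\pm$ with a smooth positive closed form is cohomological and that the mass of a positive closed current dominates the norm of its cohomology class; also note your polynomial Jordan-block factors are harmless only because the simple-action hypothesis gives the strict inequalities $d_{p\pm1}<d_p$, which is indeed the crucial point the lemma is meant to exploit.
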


\begin{proof}
We deduce from Corollary \ref{cor-epsilon} (iv) that we have
\begin{equation} \label{*estimate1}
\ddc\Phi^\pm_\varepsilon \wedge \mathbb T_+ \gtrsim_\kappa -\omega_0\wedge \mathbb T_+= -\sum_{j=0}^\kappa(F^{l_j})^*\tilde{\omega}\wedge \mathbb T_+=:-\Omega_0.
\end{equation}
 We will show that, for every $j$, the mass of $(F^{l_j})^*\tilde{\omega} \wedge \mathbb T_+$ is bounded independently of $n_1,\dots, n_\kappa$. Using that $F^*(\mathbb T_+)=d_p^2\mathbb T_+$, we have 
\begin{equation} \label{*estimate2}
(F^{l_j})^*\tilde{\omega} \wedge \mathbb T_+  = d_p^{-2l_j}(F^{l_j})^*( \tilde{\omega} \wedge \mathbb T_+).
\end{equation}

Since the mass of a positive closed current can be computed cohomologically and $d_{k+1}(F)\le d_p\delta_0$, for every current $R$ in $\Dc_{k+1}(X\times X)$ we have $\|(F^n)^*(R)\|_* \lesssim (d_p\delta_0)^n \|R\|_*$. For every $j$, it follows that we have
\begin{equation}\label{*estimate3}
    \big\| (F^{l_j})^*( \tilde{\omega} \wedge \mathbb T_+) \big\| \lesssim (d_p\delta_0)^{l_j}\|\tilde{\omega} \wedge \mathbb T_+ \|_* \lesssim (d_p\delta_0)^{l_j}.
\end{equation}
We can write $\ddc\Phi^\pm_\varepsilon\wedge\mathbb T_+$ as $(\ddc\Phi^\pm_\varepsilon\wedge\mathbb T_++\tilde{c}_\kappa\Omega_0)-\tilde{c}_\kappa\Omega_0$, which is the difference of two positive currents, where $\tilde{c}_\kappa$ is the implicit constant in \eqref{*estimate1}. Since $\ddc\Phi^\pm_\varepsilon\wedge\mathbb T_+$ is exact, the mass of $\ddc\Phi^\pm_\varepsilon\wedge\mathbb T_++\tilde{c}_\kappa\Omega_0$ is equal to $\|\tilde{c}_\kappa\Omega_0\|$. Hence, combining \eqref{*estimate2} and \eqref{*estimate3} and using the definitions of $\Omega_0$ and $\|\cdot\|_*$ gives the statement.
\end{proof}

From \cite[Proposition 3.4.2]{ds-superpot-kahler}, we know that $\mathbb T_+\wedge \mathbb T_-$ has a $(2,\lambda_0,M)$-H\"older-continuous super-potential for some $0<\lambda_0\le1$ and $M>0$. Set
\begin{equation}\label{deltaprime}
    \delta':=d_p^{\frac{1}{1+\lambda_0}}\delta_0^{\frac{\lambda_0}{1+\lambda_0}},
\end{equation}
and observe that $\delta_0<\delta'<d_p$. We will prove Proposition \ref{mixing-quasipsh} and Theorem \ref{thm-mixing} for every $\delta'<\delta<d_p$. This is equivalent to ask that
$$\tilde{\delta}:=d_p^{1/\lambda_0}\delta_0/\delta^{1/\lambda_0}<\delta.$$

\begin{remark}\label{lambdazero}
    One can actually prove that $\lambda_0\ge\displaystyle\frac{1}{8}\left(\frac{\log(d_p/\delta'')}{\log(d_p/\delta'')+\log{A}}\right)^2$, where $A=\|F^*\|_{\Cc^1}$ and $\delta''$ is any real number between $\delta_0$ and $d_p$, see for instance \cite[Lemma 4.2.5]{ds-superpot-kahler}. Hence, $\delta$ depends only on the dynamical degrees and the Lipschitz constant of $f$. In particular, it can be taken to depend continuously on $f$.
\end{remark}

\smallskip

Let now $S$ be a fixed positive closed $(k,k)$-current of mass $1$ on $X\times X$. We will need the following estimate, see also \cite[Proposition 3.3]{wu-kahler}.

\begin{proposition} \label{key-ineq}
     Let $S$ be a positive closed $(k,k)$-current such that $S_n:=d_p^{-2n}F^n_*(S)$ converges to $\mathbb T_-$. There exists a constant $c_\kappa>0$, independent of $\Phi^\pm$, such that for all $n$ we have
    $$\langle S_n\wedge \mathbb T_+,\Phi^\pm\rangle-\langle \mathbb T_-\wedge \mathbb T_+,\Phi^\pm\rangle \le c_\kappa(\delta/d_p)^n.$$
\end{proposition}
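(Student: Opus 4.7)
\noindent\emph{Proof plan.} My plan is to regularize $\Phi^\pm$ via Corollary \ref{cor-epsilon}, split the left-hand side into a principal (smooth) term and a regularization error, control the principal term through super-potentials together with Lemma \ref{lem-star-bound} and the H\"older super-potentials of the Green currents \cite{ds-superpot-kahler}, control the error term via the moderateness of $\mathbb T_-\wedge\mathbb T_+$, and finally optimize the choice of $\varepsilon=\varepsilon(n)$. Concretely, since $\Phi^\pm_\varepsilon\ge\Phi^\pm$ and $S_n\wedge\mathbb T_+,\mathbb T_-\wedge\mathbb T_+$ are positive measures, I first split
$$\langle S_n\wedge\mathbb T_+,\Phi^\pm\rangle-\langle\mathbb T_-\wedge\mathbb T_+,\Phi^\pm\rangle\le\langle(S_n-\mathbb T_-)\wedge\mathbb T_+,\Phi^\pm_\varepsilon\rangle+\langle\mathbb T_-\wedge\mathbb T_+,\Phi^\pm_\varepsilon-\Phi^\pm\rangle.$$

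For the principal summand, I set $R_\varepsilon:=\ddc\Phi^\pm_\varepsilon\wedge\mathbb T_+\in\Dc_{k+1}^0(X\times X)$, which by Lemma \ref{lem-star-bound} satisfies $\|R_\varepsilon\|_*\lesssim_\kappa1$ and by Corollary \ref{cor-epsilon}(ii) also satisfies $\|R_\varepsilon\|_{\Cc^{-2}}\lesssim_\kappa\varepsilon^{-2}$. Since $\mathbb T_+$ is closed, $\Phi^\pm_\varepsilon\mathbb T_+$ is a potential of $R_\varepsilon$, and after cancelling $\alpha$-normalizations (admissible since $S_n-\mathbb T_-$ is exact) I identify
$$\langle(S_n-\mathbb T_-)\wedge\mathbb T_+,\Phi^\pm_\varepsilon\rangle=\Uc_{S_n-\mathbb T_-}(R_\varepsilon)=d_p^{-2n}\Uc_{S-\mathbb T_-}\bigl((F^n)^*R_\varepsilon\bigr),$$
using $S_n-\mathbb T_-=d_p^{-2n}F^n_*(S-\mathbb T_-)$ together with the push-forward/pull-back duality for super-potentials. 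Combining the spectral bound $\|(F^n)^*R\|_*\lesssim(d_p\delta_0)^n\|R\|_*$ on $\Dc^0_{k+1}(X\times X)$ (valid by the simple-cohomology-action hypothesis, via $d_{k+1}(F)\le d_p\delta_0$), the H\"older regularity of $\Uc_{\mathbb T_-}$ from \cite[Proposition 3.4.2]{ds-superpot-kahler} (inherited by $\Uc_{S-\mathbb T_-}$), and the interpolation \eqref{ineq-dist} will yield an estimate of the form
$$|\langle(S_n-\mathbb T_-)\wedge\mathbb T_+,\Phi^\pm_\varepsilon\rangle|\lesssim_{S,\kappa}(\delta_0/d_p)^{\lambda_0 n}\,\varepsilon^{-c}$$
for some $c=c(\lambda_0)>0$.

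For the regularization error I use that $\mathbb T_-\wedge\mathbb T_+$ is moderate (a consequence of the H\"older super-potentials of $\mathbb T_\pm$), together with the uniform $\DSH$-boundedness of the family $\{\Phi^\pm_\varepsilon-\Phi^\pm\}_\varepsilon$ and Corollary \ref{cor-epsilon}(iii); the standard moderate-measure interpolation then gives
$$|\langle\mathbb T_-\wedge\mathbb T_+,\Phi^\pm_\varepsilon-\Phi^\pm\rangle|\lesssim_\kappa(-1/\log\varepsilon)^{\alpha_0}$$
for some $\alpha_0>0$. Finally, the choice $\varepsilon:=(\delta_0/d_p)^{\lambda_0 n/(c(1+\lambda_0))}$ balances the exponential factor of the principal term at $(\delta'/d_p)^n$ with $\delta'$ as in \eqref{deltaprime}; for every $\delta>\delta'$ the two contributions are then absorbed into a bound of the form $c_\kappa(\delta/d_p)^n$.

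The hardest part will be the principal-term estimate: one must approximate $R_\varepsilon$ by smooth currents in the $*$-topology in order to legitimize the super-potential pairing, track the $\alpha$-normalization constants that cancel only against the exact current $S_n-\mathbb T_-$, and verify that $\Uc_{S-\mathbb T_-}$ inherits the H\"older regularity of $\Uc_{\mathbb T_-}$ with constants independent of $n$. A secondary subtlety will be the interplay between the polynomial $\varepsilon$-pole in the principal term and the logarithmic $L^1$-rate of $\Phi^\pm_\varepsilon-\Phi^\pm$, which pins down the precise exponent $\delta'$ through the optimization and may require a refined regularity estimate for $\mathbb T_-\wedge\mathbb T_+$ to keep the final rate genuinely exponential in $n$.
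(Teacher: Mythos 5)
Your overall architecture (regularize via Corollary \ref{cor-epsilon}, split using $\Phi^\pm_\varepsilon\ge\Phi^\pm$ and the positivity of $S_n\wedge\mathbb T_+$, estimate a super-potential term plus a regularization error, then optimize $\varepsilon(n)$) is the same as the paper's, but the quantitative heart of your argument does not close. Your principal-term bound has a \emph{polynomial} pole in $\varepsilon$, namely $(\delta_0/d_p)^{\lambda_0 n}\varepsilon^{-c}$, while your error term decays only \emph{logarithmically} in $1/\varepsilon$, namely $(-1/\log\varepsilon)^{\alpha_0}$. These two shapes are incompatible with an exponential conclusion: to make the error term $\le(\delta/d_p)^n$ you must take $-\log\varepsilon\ge(d_p/\delta)^{n/\alpha_0}$, i.e.\ $\varepsilon$ super-exponentially small, and then $\varepsilon^{-c}=e^{c(d_p/\delta)^{n/\alpha_0}}$ destroys the factor $(\delta_0/d_p)^{\lambda_0 n}$; conversely, with your announced choice $\varepsilon=(\delta_0/d_p)^{\lambda_0 n/(c(1+\lambda_0))}$ one has $-\log\varepsilon\asymp n$, so the error term is only $O(n^{-\alpha_0})$, a polynomial rate, and the claimed absorption into $c_\kappa(\delta/d_p)^n$ is false. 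The paper's proof works precisely because the principal term is estimated with only a \emph{logarithmic} loss in $\varepsilon$: by \cite[Proposition 2.4]{wu-kahler}, the Hölder constant $M\varepsilon^{-2}$ of the super-potential of $\ddc\Phi^\pm_\varepsilon\wedge\mathbb T_+$ (coming from Corollary \ref{cor-epsilon}(ii), with the mass bound of Lemma \ref{lem-star-bound}) enters only through its logarithm, giving $\lesssim-(\delta_0/d_p)^n\log\varepsilon$. This permits the super-exponential choice $\varepsilon=e^{-(d_p/\delta)^{n/\lambda_0}}$, which makes the error $(-1/\log\varepsilon)^{\lambda_0}=(\delta/d_p)^n$ while the principal term becomes $(\tilde\delta/d_p)^n$ with $\tilde\delta<\delta$, exactly the balance encoded in the definition \eqref{deltaprime} of $\delta'$. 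Without such a logarithmic-loss estimate, your optimization cannot produce an exponential rate.

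Two further points in your principal-term reduction are unjustified. First, $S_n-\mathbb T_-$ is \emph{not} exact for finite $n$: the classes $\{S_n\}$ only converge to $\{\mathbb T_-\}$ at rate $O((\delta_0/d_p)^n)$ (this uses the simple action on cohomology), so you cannot cancel the $\alpha$-normalizations for free; the paper keeps the smooth closed form $K^\pm_\varepsilon$ from the normalization and bounds the resulting cohomological term $\langle S_n-\mathbb T_-,K^\pm_\varepsilon\rangle$ separately by $(\delta_0/d_p)^n$ via \cite[Lemma 3.1]{wu-kahler}, with a constant independent of $\varepsilon$ thanks to Lemma \ref{lem-star-bound}. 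Second, you assume that $\Uc_{S-\mathbb T_-}$ is Hölder continuous, but the proposition allows an arbitrary positive closed $S$ with $S_n\to\mathbb T_-$ (and even for $S=[\Delta]$ this is a nontrivial claim on a general compact Kähler manifold); the paper sidesteps this by placing the Hölder regularity on the test current $\ddc\Phi^\pm_\varepsilon\wedge\mathbb T_+$ rather than on $S$, and by using the contraction of normalized super-potentials under $F_*$ instead of regularity of $\Uc_{S}$. As written, your proof therefore has a genuine gap both in the regularity hypothesis it invokes and, more seriously, in the final $\varepsilon$-optimization.
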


In order to prove Proposition \ref{key-ineq}, we follow the proof of \cite[Proposition 3.3]{wu-kahler}. Every step applies, but we have to correct the use of the estimate in \cite[Lemma 3.2]{wu-kahler}, see the comment before Proposition \ref{convol}. That estimate, applied to $X\times X$ and $\Phi^\pm$, says that
    \begin{equation} \label{hao-k-1}
        \big|\Uc_{\mathbb T_+\wedge\mathbb T_-}(\ddc\Phi^\pm_\varepsilon)-\Uc_{\mathbb T_+\wedge\mathbb T_-}(\ddc\Phi^\pm)\big| \lesssim_\kappa \varepsilon^{\lambda_0}.
    \end{equation}
    Inequality \eqref{hao-k-1} is a consequence of \cite[first inequality in (3.1)]{wu-kahler}, which in the case of $X\times X$ and $\Phi^\pm$ becomes
    \begin{equation} \label{hao-k-2}
        \|\Phi^\pm_\varepsilon-\Phi^\pm\|_{L^1(\tilde{\omega}^{2k})}\lesssim_\kappa \varepsilon.
    \end{equation}
    On the other hand, we have seen in Corollary \ref{cor-epsilon} (iii) that \eqref{hao-k-2} holds with $-1/\log{\varepsilon}$ instead of $\varepsilon$ in the right hand side, see \eqref{citeabove} below.

\begin{proof}[Proof of Proposition \ref{key-ineq}.]
    From Corollary \ref{cor-epsilon} (i) and Lemma \ref{lem-star-bound}, we have $\|\Phi^\pm_\varepsilon\|_\infty\lesssim_\kappa 1$ and $\norm{\ddc\Phi^\pm_\varepsilon\wedge\mathbb T_+}_*\lesssim_\kappa 1$ for every $0<\varepsilon\le1/2$. Hence, up to rescaling, we can assume without loss of generality that we have $\|\Phi^\pm_\varepsilon\|_\infty \le 1$ and $\norm{\ddc\Phi^\pm_\varepsilon\wedge\mathbb T_+}_*\le1$. The $(2,\lambda,M\varepsilon^{-2})$-H\"older-continuity of the super-potentials of $\ddc\Phi^\pm_\varepsilon\wedge\mathbb T_+$, for some $M>0$ and $0<\lambda\le1$, follows from Corollary \ref{cor-epsilon} (ii).

    \smallskip

    From the fact that $\Phi^\pm_\varepsilon\ge\Phi^\pm$ and a direct computation, we get
    \begin{align} \label{many-potentials}
    \langle S_n\wedge \mathbb T_+,&\Phi^\pm\rangle-\langle \mathbb T_-\wedge \mathbb T_+,\Phi^\pm\rangle\\
    &\le\langle S_n\wedge \mathbb T_+,\Phi^\pm_\varepsilon\rangle-\langle \mathbb T_-\wedge \mathbb T_+,\Phi^\pm\rangle\nonumber\\
    &=\langle S_n\wedge \mathbb T_+,\Phi^\pm_\varepsilon\rangle-\langle \mathbb T_-\wedge \mathbb T_+,\Phi^\pm_\varepsilon\rangle+\langle \mathbb T_-\wedge \mathbb T_+,\Phi^\pm_\varepsilon\rangle-\langle \mathbb T_-\wedge \mathbb T_+,\Phi^\pm\rangle\nonumber\\
    &=\begin{aligned}[t]&\Uc_{S_n}(\ddc\Phi^\pm_\varepsilon\wedge\mathbb T_+)-\Uc_{\mathbb T_-}(\ddc\Phi^\pm_\varepsilon\wedge\mathbb T_+)+\langle S_n,K^\pm_\varepsilon\rangle-\langle \mathbb T_-,K^\pm_\varepsilon\rangle\nonumber\\
    &+\Uc_{\mathbb T_+\wedge\mathbb T_-}(\ddc\Phi^\pm_\varepsilon)+\langle\tilde{\omega}^{2k},\Phi^\pm_\varepsilon\rangle-\Uc_{\mathbb T_+\wedge\mathbb T_-}(\ddc\Phi^\pm)-\langle\tilde{\omega}^{2k},\Phi^\pm\rangle,\end{aligned}
    \end{align}
    where $K_\varepsilon$ is a smooth closed $(k,k)$-form such that $\Phi^\pm_\varepsilon\mathbb T_+-K_\varepsilon$ is a normalized super-potential of $\ddc\Phi^\pm_\varepsilon\wedge\mathbb T_+$. From Corollary \ref{cor-epsilon} (iii) we have that
    \begin{equation}\label{citeabove}
    \|\Phi^\pm_\varepsilon-\Phi^\pm\|_{L^1(\tilde{\omega}^{2k})}\lesssim_\kappa -1/\log{\varepsilon}.
    \end{equation}
    From \eqref{citeabove} we deduce
    \begin{equation} \label{logepsilonnotalpha}
    \big|\langle\tilde{\omega}^{2k},\Phi^\pm_\varepsilon\rangle-\langle\tilde{\omega}^{2k},\Phi^\pm\rangle \big| \lesssim -1/\log{\varepsilon}
    \end{equation}
    and, using the $(2,\lambda_0,M)$-H\"older-continuity of $\Uc_{\mathbb T_+\wedge\mathbb T_-}$,
    \begin{equation} \label{logepsilonalpha}
    \big| \Uc_{\mathbb T_+\wedge\mathbb T_-}(\ddc\Phi^\pm_\varepsilon)-\Uc_{\mathbb T_+\wedge\mathbb T_-}(\ddc\Phi^\pm) \big| \lesssim (-1/\log{\varepsilon})^{\lambda_0}.
    \end{equation}
    From \cite[Proposition 2.4]{wu-kahler} and \cite[Lemma 3.1]{wu-kahler} we have
    \begin{equation}\label{doubletrouble1}
    \big|\Uc_{S_n}(\ddc\Phi^\pm_\varepsilon\wedge\mathbb T_+)-\Uc_{\mathbb T_-}(\ddc\Phi^\pm_\varepsilon\wedge\mathbb T_+)\big|\lesssim -(\delta_0/d_p)^n\log\varepsilon
    \end{equation}
    and
    \begin{equation}\label{doubletrouble2}
    \big|\langle S_n,K^\pm_\varepsilon\rangle-\langle \mathbb T_-,K^\pm_\varepsilon\rangle\big|\lesssim (\delta_0/d_p)^n,
    \end{equation}
    respectively. Combining \eqref{many-potentials}, \eqref{logepsilonnotalpha}, \eqref{logepsilonalpha}, \eqref{doubletrouble1} and \eqref{doubletrouble2}, we get
    $$\langle S_n\wedge \mathbb T_+,\Phi^\pm_\varepsilon\rangle-\langle \mathbb T_-\wedge \mathbb T_+,\Phi^\pm\rangle \lesssim -(\delta_0/d_p)^n\log{\varepsilon}+(\delta_0/d_p)^n+(-1/\log{\varepsilon})^{\lambda_0}-1/\log{\varepsilon}.$$
    We just need to prove the statement for $n$ sufficiently large. It then suffices to choose $\varepsilon :=e^{-(d_p/\delta)^{n/\lambda_0}}$. We get
    $$\langle S_n\wedge \mathbb T_+,\Phi^\pm\rangle-\langle \mathbb T_-\wedge \mathbb T_+,\Phi^\pm\rangle \lesssim (\tilde{\delta}/d_p)^n+(\delta_0/d_p)^n+(\delta/d_p)^n+(\delta/d_p)^{n/\lambda_0}\lesssim (\delta/d_p)^n.$$
    The proof is complete.
\end{proof}

We can now prove Proposition \ref{prop-mixing-bounded}. Using the invariance of $\mu$, the desired inequality does not change if we replace $n_j$ by $n_j-1$ for $1\leq j\leq \kappa$ and $g_0$ by $g_0\circ f^{-1}$. Therefore, it is enough to assume that $n_1$ is even. We have the following lemma.

\begin{lemma}\label{lem-Psi}
There is a constant $c_\kappa>0$, independent of $n_1,\dots, n_\kappa$ and $g_0,\dots,g_\kappa$, such that 
$$ \Big| \int \prod_{j=0}^\kappa (g_j\circ f^{n_j})\,\dd \mu -\int g_0 \,\dd \mu \int \prod_{j=1}^\kappa (g_j\circ f^{n_j-n_1})\,\dd \mu \Big|\leq c_\kappa \Big(\frac{\delta}{d_p}\Big)^{n_1 /2}.    $$
\end{lemma}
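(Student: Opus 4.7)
The plan is to realize both quantities inside the absolute value as pairings of currents on $X\times X$ and then apply Proposition \ref{key-ineq} through the auxiliary functions $\Phi^\pm$. Set $m:=n_1/2$, which is a nonnegative integer by the reduction preceding the statement, and let $S_m:=d_p^{-2m}(F^m)_*[\Delta]$; this is a positive closed $(k,k)$-current on $X\times X$ converging to $\mathbb T_-$, so Proposition \ref{key-ineq} applies with $S=[\Delta]$ and $n=m$. The main analytical ingredient is the identity
\[
\langle S_m \wedge \mathbb T_+, \Psi\rangle = \int_X \Psi(f^m z,f^{-m}z)\,\dd\mu(z)
\]
for every bounded Borel $\Psi$ on $X\times X$. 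This is obtained from the base case $\langle[\Delta]\wedge\mathbb T_+,\Psi\rangle=\int_X\Psi(z,z)\,\dd\mu(z)$, which encodes $[\Delta]\wedge\mathbb T_+=\iota_*\mu$ with $\iota:X\to X\times X$ the diagonal embedding (using $\iota^*\mathbb T_+=T_+\wedge T_-=\mu$), by pushing forward under $F^m$ and applying the projection formula together with the invariance $F^*\mathbb T_+=d_p^2\mathbb T_+$.

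Specializing this identity to $\Psi=\prod_{j=0}^\kappa\widetilde G_j$ and performing the change of variables $z\mapsto f^m u$ (allowed by $f$-invariance of $\mu$), one obtains $\langle S_m\wedge\mathbb T_+,\prod\widetilde G_j\rangle=\int g_0\prod_{j=1}^\kappa(g_j\circ f^{n_j})\,\dd\mu$. On the other hand $\mathbb T_-\wedge\mathbb T_+=\mu\otimes\mu$, and since $\widetilde G_0(z,w)=g_0(w)$ depends only on $w$ while $\widetilde G_j(z,w)=g_j(f^{l_j}z)$ for $j\geq1$ depends only on $z$, the product integrates to $\langle\mathbb T_-\wedge\mathbb T_+,\prod\widetilde G_j\rangle=\int g_0\,\dd\mu\cdot\int\prod_{j=1}^\kappa(g_j\circ f^{n_j-n_1})\,\dd\mu$. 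The quantity to bound is therefore $|D|$ with $D:=\langle(S_m-\mathbb T_-)\wedge\mathbb T_+,\prod_{j=0}^\kappa\widetilde G_j\rangle$.

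The crucial simplification is that each individual $\widetilde G_j$ and $\widetilde G_j^2$ also depends on only one of the two variables of $X\times X$. Repeating the above computation with each of these as integrand and using $f$-invariance yields $\langle S_m\wedge\mathbb T_+,\widetilde G_j\rangle=\langle\mathbb T_-\wedge\mathbb T_+,\widetilde G_j\rangle=\int g_j\,\dd\mu$, and analogously for $\widetilde G_j^2$. Consequently the pairing difference against $S_m-\mathbb T_-$ of the symmetric part $\sum_j((\kappa+1)\widetilde G_j+\tfrac{\kappa}{2}\widetilde G_j^2)$ of $\Phi^\pm$ vanishes, so $\langle(S_m-\mathbb T_-)\wedge\mathbb T_+,\Phi^\pm\rangle=\pm D$. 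Proposition \ref{key-ineq} applied to $S=[\Delta]$ at $n=m$ then gives $\pm D\leq c_\kappa(\delta/d_p)^m=c_\kappa(\delta/d_p)^{n_1/2}$, hence $|D|\leq c_\kappa(\delta/d_p)^{n_1/2}$ as claimed. The main technical hurdle is the rigorous identification $S_m\wedge\mathbb T_+=\tilde\iota^m_*\mu$ where $\tilde\iota^m(z):=(f^mz,f^{-m}z)$; this requires the super-potential framework to define the wedge with the singular current $\mathbb T_+$ and to justify the projection formula, and the H\"older continuity of the super-potential of $\mathbb T_+$ recalled in the preceding subsection is exactly what is needed to make everything rigorous.
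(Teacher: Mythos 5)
Your proof is correct and follows essentially the same route as the paper: you encode the correlation as $\langle d_p^{-n_1}(F^{n_1/2})_*[\Delta]\wedge\mathbb T_+,\cdot\rangle$ versus $\langle\mathbb T_-\wedge\mathbb T_+,\cdot\rangle$, observe that the symmetric part of $\Phi^\pm$ contributes equally to both (by $f$-invariance of $\mu$), and conclude with Proposition \ref{key-ineq} applied to $S=[\Delta]$ at time $n_1/2$, exactly as in the paper's identities \eqref{split1} and \eqref{split4}. The only difference is presentational (cancelling the linear and quadratic terms directly rather than subtracting the two displayed identities), and your identification $[\Delta]\wedge\mathbb T_+=\iota_*\mu$ is the same computation the paper invokes via \cite[Lemma 3.5]{wu-vergamini-24}.
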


\begin{proof}
Put $\Psi:= g_1 (g_2 \circ f^{n_2-n_1})\cdots (g_\kappa \circ f^{n_\kappa-n_1})$. 
We are going to prove that we have
\begin{equation} \label{ineq-mod}
 \Big|\int g_0 ( \Psi\circ f^{n_1})\,\dd \mu -\int g_0 \,\dd \mu \int \Psi \,\dd \mu\Big| \leq c_\kappa \Big(\frac{\delta}{d_p}\Big)^{n_1/2}  \end{equation}
for some $c_\kappa>0$ independent of $n_1,\dots, n_\kappa$ and $g_0,\dots,g_\kappa$. This gives the desired result. We will make use of the functions $\Phi^\pm$ defined in \eqref{phipm}.

\smallskip

Using the invariance of $\mu$ and the definitions of $\Psi$ and $\Phi^\pm$, a direct computation (see for instance \cite[Lemma 3.5]{wu-vergamini-24}) gives
\begin{align*}
\pm\int g_0 (\Psi \circ f^{n_1})\,\dd\mu+\int \Big( (\kappa+1)\sum_{j=0}^\kappa g_j+\frac{\kappa}{2}\sum_{j=0}^\kappa g^2_j \Big)\,\dd\mu = \big\lp \mathbb T_+ \wedge [\Delta]  \,,\, (F^{n_1/2})^* \Phi^\pm \big\rp.
\end{align*}

From the fact that $F^*(\mathbb T_+)=d_p^2\mathbb T_+$, it follows that we have
$$\big\lp \mathbb T_+ \wedge [\Delta]  \,,\, (F^{n_1/2})^* \Phi^\pm \big\rp = \big\lp (F^{n_1/2})_*(\mathbb T_+ \wedge [\Delta])  \,,\,  \Phi^\pm \big\rp  = \big\lp  d_p^{-n_1}(F^{n_1/2})_*[\Delta]\wedge\mathbb T_+  \,,\,   \Phi^\pm \big\rp.$$
Therefore, we have
\begin{equation} \label{split1}
    \pm\int g_0 (\Psi \circ f^{n_1})\,\dd\mu+\int \Big( (\kappa+1)\sum_{j=0}^\kappa g_j+\frac{\kappa}{2}\sum_{j=0}^\kappa g^2_j \Big)\,\dd\mu = \big\lp  d_p^{-n_1}(F^{n_1/2})_*[\Delta]\wedge\mathbb T_+  \,,\,   \Phi^\pm \big\rp.
\end{equation}

Since $\mu\otimes \mu= \mathbb T_+ \wedge \mathbb T_-=\mathbb T_-\wedge\mathbb T_+$, and using also the invariance of $\mu$, we get
\begin{equation} \label{split4}
\int \Big( (\kappa+1)\sum_{j=0}^\kappa g_j+\frac{\kappa}{2}\sum_{j=0}^\kappa g^2_j \Big)\,\dd\mu\pm\lp \mu, g_0 \rp\lp \mu, \Psi  \rp=   \lp   \mu\otimes \mu , \Phi^\pm \rp =\big\lp  \mathbb T_-\wedge\mathbb T_+, \Phi^\pm\big\rp.
\end{equation}

Subtracting \eqref{split4} from \eqref{split1} and applying Proposition \ref{key-ineq} with $S=[\Delta]$, we get \eqref{ineq-mod}. This concludes the proof of the lemma.
\end{proof}

\begin{proof}[End of the proof of Proposition \ref{prop-mixing-bounded}]
We proceed by induction. The base case $\kappa=1$ is given by Lemma \ref{lem-Psi}. Suppose that the statement holds for $\kappa-1$ observables. We need to prove that it holds for $\kappa$, i.e., that we have
$$\Big|\int \prod_{j=0}^\kappa (g_j\circ f^{n_j})\,\dd\mu -\prod_{j=0}^\kappa \int  g_j  \,\dd\mu\Big|\lesssim \Big(\frac{\delta}{d_p}\Big)^{\min_{0\leq j\leq \kappa-1}(n_{j+1}-n_j)/2}.$$
Recall that we can assume that $\|g_j\|_\qpsh\leq 1$ for every $j\ge1$. Again by Lemma \ref{lem-Psi}, it is enough to show that we have
$$ \Big| \int g_0 \,\dd \mu \int \prod_{j=1}^\kappa (g_j\circ f^{n_j-n_1})\,\dd \mu-\prod_{j=0}^\kappa \int  g_j  \,\dd\mu\Big|\lesssim \Big(\frac{\delta}{d_p}\Big)^{-\min_{1\leq j\leq \kappa-1}(n_{j+1}-n_j)/2}.$$
This follows from the inductive assumption. The proof is complete.
\end{proof}

\subsection{Mixing for all d.s.h.\ functions} \label{mixing-dsh}

We can now deduce our main theorem from Proposition \ref{mixing-quasipsh}. As, from now on, the arguments are the same as those in \cite[Theorem 1.2]{wu-vergamini-24}, we will only give a sketch of the proof.

\begin{proof}[Proof of Theorem \ref{thm-mixing}]
Up to rescaling, we can assume without loss of generality that $\|\varphi_j\|_\DSH\le1$ for every $j$. Applying Lemma \ref{dsh-split-psh}, and by linearity, we may also assume that we have
$$\varphi_j\le0,\qquad\|\varphi_j\|_\DSH \le 1,\qquad\text{and}\qquad\ddc\varphi_j\ge-\omega\qquad\text{for every }j.$$

\smallskip

Using Lemma \ref{tail-split}, we can write $\varphi_j=\varphi^{(N)}_{j,1}+\varphi^{(N)}_{j,2}$, where we choose $N$ as
\begin{equation} \label{choiceofN}
    N:=\lfloor(2\alpha)^{-1}\min_{0\leq j\leq \kappa-1}(n_{j+1}-n_j)\log{(d_p/\delta)}\rfloor-1,
\end{equation}
or $N=0$ if the expression in \eqref{choiceofN} is negative. Since $N$ is fixed,  we will omit its dependence and write $\varphi_{j,1}^{(N)}=\varphi_{j,1}$ and $\varphi_{j,2}^{(N)}=\varphi_{j,2}$.

\smallskip

Indexing all the possible choices of the $v_j$'s indexes in the $\varphi_{j,v_j}$'s with $\bv:=(v_0,v_1,\dots,v_\kappa)\in\{1,2\}^{\kappa +1}$, as in \cite[Section 3.2]{wu-vergamini-24} we have
\begin{align*}
    \Big|\int \Big(\prod_{j=0}^\kappa \varphi_j\circ f^{n_j} \Big)\,\dd \mu &-\prod_{j=0}^\kappa  \int \varphi_j \,\dd \mu \Big|\le\Big|\int \Big(\prod_{j=0}^\kappa \varphi_{j,1}\circ f^{n_j} \Big)\,\dd \mu -\prod_{j=0}^\kappa  \int \varphi_{j,1} \,\dd \mu\Big|\\
    & +\sum_{ \bv\not=(1,\dots,1)}\bigg(\Big|\int \Big(\prod_{j=0}^\kappa \varphi_{j,v_j}\circ f^{n_j} \Big)\,\dd \mu\Big|+\Big|\prod_{j=0}^\kappa  \int \varphi_{j,v_j} \,\dd \mu\Big|\bigg).
\end{align*}
To estimate the right hand side of the last expression, we treat two terms separately.

\smallskip
\textbf{Case} $\bv=(1,\dots,1)$. Since all the $\varphi_{j,1}$'s are quasi-p.s.h.\ with $\|\varphi_{j,1}\|_\qpsh \le N+1$ for every $j$, we can apply Proposition \ref{prop-mixing-bounded} to get
\begin{align*}
\Big|\int &\varphi_{0,1}  ( \varphi_{1,1}\circ f^{n_1}  ) \cdots (\varphi_{\kappa,1} \circ f^{n_\kappa}  )\,\dd\mu -\prod_{j=0}^\kappa \int  \varphi_{j,1}  \,\dd\mu\Big|\\
&\leq  C_\kappa \Big(\frac{\delta}{d_p}\Big)^{\min_{0\leq j\leq \kappa-1}(n_{j+1}-n_j)/2}\prod_{j=0}^\kappa \|\varphi_{j,1}\|_\qpsh\leq C_\kappa \Big(\frac{\delta}{d_p}\Big)^{\min_{0\leq j\leq \kappa-1}(n_{j+1}-n_j)/2} (N+1)^{\kappa+1}.
\end{align*}

\textbf{Case} $\bv\not=(1,\dots,1)$. As in \cite[Section 3.2]{wu-vergamini-24}, each of these terms is bounded by $N^{\kappa}e^{-\alpha N}$, up to a multiplicative constant depending only on $\kappa$.

\smallskip

Up to choosing a slightly worse $\delta$, we can conclude the proof as in \cite[Section 3.2]{wu-vergamini-24}.
\end{proof}

\bigskip


\medskip


\begin{thebibliography}{10}

    \bibitem{bedford1}
    Eric Bedford, Mikhail Lyubich, and John Smillie.
    \newblock Polynomial diffeomorphisms of {$\mathbb C^2$}. {IV}. {T}he measure of maximal entropy and laminar currents.
    \newblock {\em Invent. Math.}, 112(1):77--125, 1993.
    
    \bibitem{bed-1991}
    Eric Bedford and John Smillie.
    \newblock Polynomial diffeomorphisms of {$\mathbb{C}^2$}: currents, equilibrium measure and hyperbolicity.
    \newblock {\em Invent. Math.}, 103(1):69--99, 1991.
    
    \bibitem{bd-kahler}
    Fabrizio Bianchi and Tien-Cuong Dinh.
    \newblock Exponential mixing of all orders and {CLT} for automorphisms of compact {K}{\"a}hler manifolds.
    \newblock {\em {\tt arXiv:2304.13335}}, 2023.
    
    \bibitem{bd-gafa}
    Fabrizio Bianchi and Tien-Cuong Dinh.
    \newblock Equilibrium {S}tates of {E}ndomorphisms of {$\mathbb P^k$}: {S}pectral {S}tability and {L}imit {T}heorems.
    \newblock {\em Geom. Funct. Anal.}, 34(4):1006--1051, 2024.
    
    \bibitem{bian-dinh-sigma}
    Fabrizio Bianchi and Tien-Cuong Dinh.
    \newblock Every complex {H}\'{e}non map is exponentially mixing of all orders and satisfies the {CLT}.
    \newblock {\em Forum Math. Sigma}, 12:Paper No. e4, 2024.
    
    \bibitem{bjo-gor-clt}
    Michael Bj\"{o}rklund and Alexander Gorodnik.
    \newblock Central limit theorems for group actions which are exponentially mixing of all orders.
    \newblock {\em J. Anal. Math.}, 141(2):457--482, 2020.
    
    \bibitem{blocki-kolodziej}
    Zbigniew B\l{}ocki and S\l{}awomir Ko\l{}odziej.
    \newblock On regularization of plurisubharmonic functions on manifolds.
    \newblock {\em Proc. Amer. Math. Soc.}, 135(7):2089--2093, 2007.
    
    \bibitem{C01}
    Serge Cantat.
    \newblock Dynamique des automorphismes des surfaces {$K3$}.
    \newblock {\em Acta Math.}, 187(1):1--57, 2001.
    
    \bibitem{D-TD12}
    Henry De~Th\'elin and Tien-Cuong Dinh.
    \newblock Dynamics of automorphisms on compact {K}\"ahler manifolds.
    \newblock {\em Adv. Math.}, 229(5):2640--2655, 2012.
    
    \bibitem{hen-gab-CLT}
    Henry De~Thélin and Gabriel Vigny.
    \newblock {E}xponential mixing of all orders and {CLT} for generic birational maps of {$\mathbb {P}^k$}.
    \newblock {\em {\tt arXiv:2402.01178}}, 2024.
    
    \bibitem{demailly:agbook}
    Jean-Pierre Demailly.
    \newblock {\em Complex Analytic and Differential Geometry}.
    \newblock \url{http://www-fourier.ujf-grenoble.fr/~demailly/manuscripts/agbook.pdf}.
    
    \bibitem{dinh-decay-henon}
    Tien-Cuong Dinh.
    \newblock Decay of correlations for {H}\'{e}non maps.
    \newblock {\em Acta Math.}, 195:253--264, 2005.
    
    \bibitem{din-ma-ngu-ens}
    Tien-Cuong Dinh, Xiaonan Ma, and Vi\^{e}t-Anh Nguy\^{e}n.
    \newblock Equidistribution speed for {F}ekete points associated with an ample line bundle.
    \newblock {\em Ann. Sci. \'{E}c. Norm. Sup\'{e}r. (4)}, 50(3):545--578, 2017.
    
    \bibitem{dinh-exponential}
    Tien-Cuong Dinh, Vi\^{e}t-Anh Nguy\^{e}n, and Nessim Sibony.
    \newblock Exponential estimates for plurisubharmonic functions and stochastic dynamics.
    \newblock {\em J. Differential Geom.}, 84(3):465--488, 2010.
    
    \bibitem{DS04}
    Tien-Cuong Dinh and Nessim Sibony.
    \newblock Regularization of currents and entropy.
    \newblock {\em Ann. Sci. \'Ecole Norm. Sup. (4)}, 37(6):959--971, 2004.
    
    \bibitem{DS05}
    Tien-Cuong Dinh and Nessim Sibony.
    \newblock Green currents for holomorphic automorphisms of compact {K}\"ahler manifolds.
    \newblock {\em J. Amer. Math. Soc.}, 18(2):291--312, 2005.
    
    \bibitem{din-sib-cmh}
    Tien-Cuong Dinh and Nessim Sibony.
    \newblock Distribution des valeurs de transformations m\'{e}romorphes et applications.
    \newblock {\em Comment. Math. Helv.}, 81(1):221--258, 2006.
    
    \bibitem{dinh-sibony:acta}
    Tien-Cuong Dinh and Nessim Sibony.
    \newblock Super-potentials of positive closed currents, intersection theory and dynamics.
    \newblock {\em Acta Math.}, 203(1):1--82, 2009.
    
    \bibitem{dinh-sibony:cime}
    Tien-Cuong Dinh and Nessim Sibony.
    \newblock Dynamics in several complex variables: endomorphisms of projective spaces and polynomial-like mappings.
    \newblock In {\em Holomorphic dynamical systems}, volume 1998 of {\em Lecture Notes in Math.}, pages 165--294. Springer, Berlin, 2010.
    
    \bibitem{DS10CM}
    Tien-Cuong Dinh and Nessim Sibony.
    \newblock Exponential mixing for automorphisms on compact {K}ähler manifolds.
    \newblock In {\em Dynamical numbers---interplay between dynamical systems and number theory}, volume 532 of {\em Contemp. Math.}, pages 107--114. Amer. Math. Soc., Providence, RI, 2010.
    
    \bibitem{ds-superpot-kahler}
    Tien-Cuong Dinh and Nessim Sibony.
    \newblock Super-potentials for currents on compact {K}\"ahler manifolds and dynamics of automorphisms.
    \newblock {\em J. Algebraic Geom.}, 19(3):473--529, 2010.
    
    \bibitem{federer}
    Herbert Federer.
    \newblock {\em Geometric measure theory}.
    \newblock Classics in Mathematics. Springer, 2014.
    
    \bibitem{forn}
    John~Erik Forn{\ae}ss and Nessim Sibony.
    \newblock Complex {H}\'{e}non mappings in {${\mathbb C}^2$} and {F}atou-{B}ieberbach domains.
    \newblock {\em Duke Math. J.}, 65(2):345--380, 1992.
    
    \bibitem{G90}
    M.~Gromov.
    \newblock Convex sets and {K}\"ahler manifolds.
    \newblock In {\em Advances in differential geometry and topology}, pages 1--38. World Sci. Publ., Teaneck, NJ, 1990.
    
    \bibitem{K79}
    Askold~Georgievich Khovanskii.
    \newblock The geometry of convex polyhedra and algebraic geometry.
    \newblock {\em Uspekhi Mat. Nauk}, 34(4):160--161, 1979.
    
    \bibitem{Te79}
    Bernard Teissier.
    \newblock Du th\'eor\`eme de l'index de {H}odge aux in\'egalit\'es isop\'erim\'etriques.
    \newblock {\em C. R. Acad. Sci. Paris S\'er. A-B}, 288(4):A287--A289, 1979.
    
    \bibitem{triebel}
    Hans Triebel.
    \newblock {\em Interpolation theory, function spaces, differential operators}.
    \newblock Johann Ambrosius Barth, Heidelberg, second edition, 1995.
    
    \bibitem{wu-vergamini-24}
    Marco Vergamini and Hao Wu.
    \newblock {M}ixing and {CLT} for {H}\'enon-{S}ibony maps: plurisubharmonic observables.
    \newblock {\em {\tt arXiv:2407.15418}}, 2024.
    
    \bibitem{vigny-decay}
    Gabriel Vigny.
    \newblock Exponential decay of correlations for generic regular birational maps of {$\mathbb{P}^k$}.
    \newblock {\em Math. Ann.}, 362(3-4):1033--1054, 2015.
    
    \bibitem{wu-kahler}
    Hao Wu.
    \newblock Exponential mixing property for automorphisms of compact {K}\"ahler manifolds.
    \newblock {\em Ark. Mat.}, 59(1):213--227, 2021.
    
    \bibitem{Wu-Ergodic}
    Hao Wu.
    \newblock Exponential mixing property for {H}\'{e}non-{S}ibony maps of {$\mathbb {C}^k$}.
    \newblock {\em Ergodic Theory Dynam. Systems}, 42(12):3818--3830, 2022.
    
    \end{thebibliography}
\end{document}